\newtheorem{MainTheorem}{Theorem}
\newtheorem{MainCorollary}{Corollary}
\newtheorem{theorem}{Theorem}[section]
\newtheorem{proposition}{Proposition}[section]
\newtheorem*{bl2d}{Blaschke's Inclusion Theorem in $\mathbb R^2$}
\theoremstyle{definition}
\newtheorem{definition}{Definition}[section]
\newtheorem{remark}{Remark}[section]
\renewcommand{\ge}{\geqslant}
\renewcommand{\le}{\leqslant}
\DeclareMathOperator{\sm}{\setminus}
\DeclareMathOperator{\dist}{dist}
\DeclareMathOperator{\di}{d}
\DeclareMathOperator{\grad}{grad}
\DeclareMathOperator{\sn}{sn}
\DeclareMathOperator{\ct}{ct}
\DeclareMathOperator{\inj}{inj}
\DeclareMathOperator{\spa}{span}
\DeclareMathOperator{\Vol}{Vol}
\DeclareMathOperator{\spn}{span}
\DeclareMathOperator{\diam}{diam}
\DeclareMathOperator{\conv}{conv}
\DeclareMathOperator{\Focal}{Focal}
\DeclareMathOperator{\Roll}{Roll}
\numberwithin{equation}{section}
\title[The Blaschke rolling theorem in Riemannian manifolds]{The Blaschke rolling theorem in Riemannian manifolds of bounded curvature}
\subjclass[2010]{53C40}
\author[Kostiantyn Drach]{Kostiantyn Drach}
\address{Universitat de Barcelona, Departament de Matem\`atiques i Inform\`atica, Gran Via 585, 08007 Barcelona, Spain}
\address{Centre de Recerca Mathem\`atica, Edifici C, Carrer de l'Albareda, 08193 Bellaterra, Spain}
\email{kostiantyn.drach@ub.edu}
\thanks{\textbf{Acknowledgements.}
The author is grateful to Alexander Borisenko for many useful and illuminating discussions on the topic of Blaschke's theorem throughout the years, and to Sebastian Boldt for continued encouragement to finish this project. The author also acknowledges partial support from the Departament de Recerca i Universitats de la Generalitat de Catalunya (2021 SGR 00697).
}
\begin{document}

\maketitle

\begin{abstract}
We generalize the classical Blaschke Rolling Theorem to convex domains in Riemannian manifolds of bounded sectional curvature and arbitrary dimension. Our results are sharp and, in this sharp form, are new even in the model spaces of constant curvature.
\end{abstract}



\section{Introduction}


The Blaschke inclusion theorem, a classical result from the field of global differential geometry, was proven by Wilhelm Blaschke in his famous monograph (see \cite[pp. 114-117]{Bla56}) more than a century ago\footnote{The 1st edition of the book was published in 1916.}. He showed the following (see Figure~\ref{Fig:Bla1}):

\begin{bl2d}
If $B_1$ and $ B_2$ are two strictly convex, compact planar domains with smooth boundaries touching at some point $p \in \partial B_1 \cap \partial B_2$ in such a way that they share a common inner normal, then 
$$
B_1 \subseteq B_2
$$
provided that the curvatures $k_i (\nu)$ of the boundaries, as functions of inner normals $\nu \in \mathbb S^1$, satisfy 
\begin{equation}
\label{CurvIneq}
k_1 (\nu) \geqslant k_2 (\nu) \text{ for all } \nu \in \mathbb S^1.
\end{equation}
\end{bl2d}

\begin{figure}[h]
\definecolor{ffqqqq}{rgb}{1.,0.,0.}
\begin{tikzpicture}[line cap=round,line join=round,>=stealth,x=1.0cm,y=1.0cm, scale=1.25]
\clip(-1,-1.72) rectangle (7,3.3);
\draw [rotate around={47.20259816176573:(2.56,0.78)},line width=1.pt,color=orange] (2.56,0.78) ellipse (2.27645031990803cm and 1.7366133878930472cm);
\draw [rotate around={49.83041995828995:(2.53,1.63)},line width=1.pt,color=ForestGreen] (2.53,1.63) ellipse (1.3502565240950077cm and 0.8987728750141095cm);
\draw [dash pattern=on 1pt off 2pt,color=NavyBlue,line width=1.pt] (1.3159230951130156,2.565448109957082)-- (3.9,3.0);
\draw (2.4,3.4) node[anchor=north west] {$p$};
\draw (0.82,-1) node[anchor=north west] {$\textcolor{orange}{B_2}$};
\draw (1.2,0.58) node[anchor=north west] {$\textcolor{ForestGreen}{B_1}$};
\draw ((2.9,1.75) node[anchor=west] {$\nu$};
\draw [rotate around={49.83041995828995:(3.355,1.25)},line width=0.5pt,color=ForestGreen] (3.355,1.25) ellipse (1.3502565240950077cm and 0.8987728750141095cm);
\draw [rotate around={49.83041995828995:(3.355,0.5)},line width=0.5pt,color=ForestGreen] (3.355,0.5) ellipse (1.3502565240950077cm and 0.8987728750141095cm);
\draw [->,line width=1.2pt,color=NavyBlue] (2.78,2.8) -- (2.949450129046894,1.742289399491584);
\draw [fill=black] (2.78,2.8) circle (1.5pt);
\end{tikzpicture}
\caption{Blaschke's inclusion theorem.}
\label{Fig:Bla1}
\end{figure}
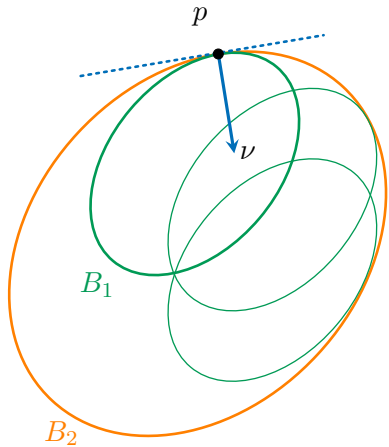

Informally speaking, if the boundary of $B_1$ is ``curved more'' than the boundary of $B_2$, then $B_1$ can ``slide'' along $\partial B_2$ while staying inside $B_2$, as indicated in Figure~\ref{Fig:Bla1}. An interesting special case is when one of the domains is a disk and thus one of the curvatures $k_i$ is constant. Traditionally, this particular case is distinguished from Blaschke's inclusion result and is referred to as \textit{the Blaschke rolling theorem} --- the corresponding disk can either be rolled inside the domain (along the boundary), or, vise versa, the domain can roll inside the disk. 

Blaschke's inclusion (and rolling) theorem was later generalized by many authors. Although extension of the \emph{rolling} result from curves in $\mathbb R^2$ to surfaces in $\mathbb R^n$, appropriately changing~(\ref{CurvIneq}) to an inequality between the second fundamental forms, follows from a section--type (if $B_2$ is a ball) or a projection--type (if $B_1$ is a ball) argument, the general inclusion case requires some technical work. Koutroufiotis~\cite{Kou72} generalized Blaschke's inclusion theorem for complete curves in $\mathbb R^2$ and complete surfaces in $\mathbb R^3$. He also treated the case when $B_1$ and $B_2$ touch in a more than one point. Later, the result was extended to compact surfaces in $\mathbb R^n$ by Rauch~\cite{Rau74}, and to complete surfaces by Delgado~\cite{Del79}. Both of them went along the original ideas of Blaschke. Brooks and Strantzen~\cite{BrStr89}, by carefully analyzing previously known arguments, generalized Blaschke's inclusion theorem for non-smooth strictly convex sets. 

While extensively treated in the Euclidean case, Blaschke's theorem received less attention in general Riemannian setting, where similar inclusion-type questions make perfect sense upon appropriate bounds on curvatures. Karcher~\cite{Kar68} proved the Blaschke rolling theorem for \textit{compact domains with the smooth boundary} in model two-dimensional geometries (Euclidean, hyperbolic and spherical; see also the reference in~\cite{Kar68} for some partial results due to Santal\'o). Moreover, Karcher showed that, unlike the spherical and Euclidean cases, Blaschke's rolling theorem is generally false for convex domains with an upper bound on the curvature of the boundary lying in the hyperbolic space. Therefore, one must impose natural restrictions to achieve a proper generalization (see~\cite[Section 8]{Kar68}). In~\cite{Mil70} Milka, using techniques of global synthetic geometry, extended the results of Karcher by proving Blaschke's rolling theorem in two-dimensional model spaces without restrictions on smoothness of the boundary by using a concept of specific curvature of the curve. Milka's techniques also work for unbounded convex domains. 

For some further progress on the discrete versions of Blaschke's theorem in $\mathbb R^2$ we refer to a paper of R\'ev\'esz~\cite{Re16} and, for convex polygons in two-dimensional model spaces, to the work of Borisenko and Miquel~\cite{BorMiqBla1}. The latter result was partially extended by Borisenko and Miquel to convex polygons in Hadamard manifolds of dimension $2$ \cite{BorMiqBla2}. 

Multidimensional constant curvature case of Blaschke's theorem seems to be folklore in the community due to seemingly straightforward reduction to the two-dimensional case via projection/section-type arguments or via the so-called \textit{dual map} (see, for example, \cite[Theorems 9.2.5, 10.4.4]{Ger06} for the definition and properties of dual maps). The multidimensional Riemannian case of varying curvature is more delicate. In~\cite{How99}, Howard proved a far reaching generalization of Blaschke's rolling theorem for Riemannian manifolds of bounded geometry with boundary. His generalization concerns the case when a ball \textit{rolls inside} a domain and is formulated in terms of equality of rolling radius $\Roll(M)$ and focal radius $\Focal(\partial M)$ of a Riemannian manifold $M$ and its boundary $\partial M$. In particular, \cite[Theorems 1.1, 4.4]{How99} imply the inner Blaschke's rolling theorem for convex domains in constant curvature spaces, using, for example,~\cite[Proposition 1.1]{CR78} to estimate $\Focal(\partial M)$ in terms of principal curvatures). Using completely different techniques, the \textit{outer} Blaschke rolling theorem, that is when a domain rolls inside a ball, was implicitly proven in $n$-dimensional hyperbolic space in~\cite{BM02} by Borisenko and Miquel and in the remaining model spaces of constant curvature, in~\cite{BDr13} by Borisenko and the author. The approach to Blaschke's theorem in~\cite{BDr13} and~\cite{BM02} was based on the so-called \textit{radial angle comparison theorem} -- a powerful tool to study strictly convex hypersurfaces with bounds on their principal curvatures, pioneered by Borisenko, and later extended in the series of publications (e.g., \cite{BGR01, BM02, Bor02, BDr13}). 

In this paper we establish the sharp version of outer Blaschke's rolling theorem for uniformly convex domains in Riemannian manifolds of bounded sectional curvature. As an ingredient, we use a sharp version of the radial angle comparison theorem, for which we provide a simplified (compared to those in~\cite{BM02, BDr13}) proof. In the last section, we will pose some open questions related to Blaschke's rolling theorem in spaces of non-constant curvature.


\section{Preliminaries and statement of the main results}


\subsection{Preliminaries}

Let $(M, \left<\cdot, \cdot\right>)$ be a complete, smooth, orientable Riemannian manifold of dimension $m \geqslant 2$.

We will consistently use the notation $D_\Sigma$ for a closed domain in $M$ with non-empty interior and at least $C^2$-smooth boundary $\Sigma := \partial D_\Sigma$; we assume that $\Sigma$ is connected. Therefore, $\Sigma$ is a closed embedded orientable hypersurface in $M$. Unless stated otherwise, we do not  assume $D_\Sigma$ to be compact. 

Denote by $\nu \colon \Sigma \to N\Sigma$ the unit normal vector field along $\Sigma$ giving a positive orientation to $\Sigma$ with respect to $D_\Sigma$, and for a point $p \in \Sigma$, define $\kappa_{\min}(p)$ to be the smallest principal curvature of $\Sigma$ at $p$ with respect to $\nu(p)$.

\begin{definition}[$\lambda$-convexity]
\label{Def:strictconv}
Given a constant $\lambda>0$, we say that the hypersurface $\Sigma$ and the domain $D_\Sigma$ are \textit{$\lambda$-convex} if 
$$
\kappa_{\min} (p) \geqslant \lambda \quad \text{for all} \quad p \in \Sigma.
$$
\end{definition}

Equivalently, if $B^\nu$ is the second fundamental form of $\Sigma$ with respect to $\nu$, then $D_\Sigma$ is $\lambda$-convex if and only if
$$
B^\nu (v, v) \geqslant \lambda \left<v,v\right> \quad \text{ for all } \quad p \in \Sigma, \quad v \in T_p \Sigma
$$
(here, $T_p \Sigma$ is the tangent space to $\Sigma$ at $p$). The quantity $k^{\nu}(p, v) := {B^{\nu}|_p(v,v)}/{\left<v,v\right>|_p}$ is called the \textit{normal curvature} of $\Sigma$ at the point $p \in \Sigma$ and in the direction of $v \in T_p \Sigma \sm \{0\}$. Therefore, a domain is $\lambda$-convex if and only if all normal curvatures of $\Sigma$ are bounded below by $\lambda$. Note that in the case $m=2$, that is when $\Sigma$ is a closed embedded curve, the normal curvature of $\Sigma$ is just the geodesic curvature of this curve. 

The notion of $\lambda$-convexity, also referred to in the literature as ``hyperconvexity'', ``spindle convexity'', ``ball convexity'', is a generalization of the classical notion of convexity. It naturally appears in various contexts in smooth and discrete geometry, most notably, in the recent advances on the reverse isoperimetric problems (see, e.g., \cite{CDT, DT} and references therein).

From the definition of $\lambda$-convexity it follows that the second fundamental form of $\Sigma$ is positive-definite. If $M$ is $C^2$-smooth and $\Sigma$ is $C^3$-smooth, then by \cite{Convexity}, $D_\Sigma$ is a convex domain in $M$, that is, every two points in $M$ can be joined by a minimizing geodesic within $D_\Sigma$. 

\begin{remark}
The notion of $\lambda$-convexity can be extended to non-smooth hypersurfaces as well; see, e.g., \cite[Definition 2.3]{Bor02} and \cite{BDr13}. Although in this paper we will be focused only on the smooth case, some of the results that we prove can be easily extended to a non-smooth setting. 
\end{remark}

Let $M(c)$ a complete simply-connected $m$-dimensional Riemannian manifold of constant sectional curvature equal $c$; we will refer to such manifolds as to \textit{model spaces of curvature $c$}. As usual, we assume $m \geqslant 2$. By a well-known classification, a model space of curvature $c$ is the spherical space $\mathbb S^{m}(c)$ when $c>0$, the Euclidean space $\mathbb R^{m}$ when $c = 0$, and the hyperbolic space $\mathbb H^{m}(c)$ when $c < 0$. We will suppress the dimension index when it does not cause confusion. 

Recall that a hypersurface $\Sigma$ in a Riemannian manifold is called \textit{totally umbilical (of curvature $\lambda$)} if $\kappa^\nu(p, v) \equiv \lambda$ for all $p \in \Sigma$ and $v \in T_p \Sigma$. In the model space of curvature $c$ one can classify complete totally umbilical hypersurfaces of curvature $\lambda$ (see, e.g., \cite[Section 35.2.4]{BZ}). Up to isometry and scaling, for $c\geqslant 0$ these are \textit{hyperplanes} (that is, complete totally geodesic hypersurfaces) and geodesic \emph{spheres}. However, in the hyperbolic space there are four types of totally umbilical hypersurfaces: hyperplanes (when $\lambda = 0$), \emph{equidistants} (when $1/\sqrt{-c} > \lambda > 0$, also called \emph{hyperspheres}), \emph{horospheres} (when $\lambda = 1/\sqrt{-\lambda}$), and geodesic spheres (when $\lambda > 1/\sqrt{-\lambda}$). Regardless of the value of $c$, spheres are the only compact totally umbilical hypersurfaces in a model space. 

For the later parts, it will be useful to introduce a so-called \textit{generalized sine function}
\begin{equation*}
\label{sin}
\sn_{c}(t) := \left\{
\begin{aligned}
&\frac{1}{\sqrt{c}}\sin \left(t\sqrt{c}\right), \text{ if }c > 0 \\
& t, \text{ if }c = 0 \\
&\frac{1}{\sqrt{-c}}\sinh \left(t\sqrt{-c}\right), \text{ if }c < 0,\\
\end{aligned}
\right.
\end{equation*}
and likewise, \textit{a generalized cotangent function} $\ct_c (t) := {\sn_c'(t)}/{\sn_c(t)}$. In this notation, if $R$ is the radius of a convex geodesic sphere in $M(c)$, then the normal curvature of this sphere is equal to $\ct_c(R)$. In particular, 
\begin{equation}
\label{Eq:Sphere}
\lambda > 0 ,\quad \text{ and moreover } \quad \lambda > \sqrt{-c} \quad \text{ for } \quad c<0.
\end{equation}
We say that $\lambda \in \mathbb R$ \emph{satisfies the convex sphere curvature constraints} (or \emph{sphere constraints} for short) if \eqref{Eq:Sphere} holds. For such $\lambda$, we denote by $R_\lambda$ the \emph{characteristic} radius of the sphere with curvature $\lambda$ in $M(c)$.

Finally, for a subset  $A \subset M$ and a point $x \in M$ the \textit{cone over $A$ with the vertex at $x$}, denoted $\mathcal C(x, A)$, is the union of all geodesic segments connecting $x$ to the points in $A$. 

We use the notation $\inj(M)$ for the global \emph{injectivity radius} of $M$. For a point $x \in M$, the \emph{exponential mapping} $\exp_x \colon T_x M \to M$ is well-defined and is a diffeomorphism of the ball of radius $\inj(M)$ in $T_p M$ onto its image. We will say that a set $S \subset T_x M$ \emph{does not belong to any closed half-space} if for any hyperplane $\Pi$ passing through the origin in $T_xM$, the set $S$ does not lie in any of the closed half-spaces with respect to $\Pi$.  

Furthermore, we denote by $K_\sigma = K_\sigma(x)$ the sectional curvature of $M$ along a $2$-dimensional subspace $\sigma \subset T_x M$. We will write $\sec(M) \ge c$ if $K_\sigma(x) \ge c$ for all $x \in M$ and  $\sigma \subset T_x M$ (and similarly $\sec(M) \le c$).

\subsection{Statements of the main results}

With the given definitions, we are now ready to state our first main result --- a sharp Riemannian version of Blaschke's rolling theorem for balls (see Figure~\ref{Fig:StatementThmA}).

\begin{MainTheorem}[Riemannian Blaschke's ball rolling theorem]
\label{Thm:convexforballs}
Let $M$ be a complete $C^2$-smooth Riemannian manifold of dimension $m \ge 2$ with $\sec(M) \ge c$. Suppose $\lambda \in \mathbb R$ satisfies the sphere constraints with characteristic radius $R_\lambda$, and assume that $\inj(M) > 2R_\lambda$. Then every $\lambda$-convex domain $D_\Sigma \subset M$ with $C^3$-smooth boundary can roll freely inside a geodesic ball of radius $R_\lambda$ in $M$.

\textbf{I.} More precisely, for every point $s \in \Sigma$, if $\gamma \colon [0, 1] \to M$, $\gamma(0) = s$, $\gamma(1) = \bar s$ is the geodesic segment of length $R_\lambda$ such that $\dot \gamma(0)$ is the inward pointing unit normal to $\Sigma$, then the geodesic ball $B(\bar s, R_\lambda)$ of radius $R_\lambda$ centered at $\bar s$ satisfies   
\begin{equation}
\label{Eq:ballincl}
B(\bar s, R_\lambda) \supseteq D_\Sigma.
\end{equation}

\textbf{II.} Moreover, $\Sigma_s := \Sigma \cap B(\bar s, R_\lambda)$ is a (closed) connected subset of $\Sigma$, and exactly one of the following alternatives must hold. 
\begin{enumerate}
\item[(i)]
If $\Sigma_s$ contains $m+1$ distinct points $s_1, \ldots, s_{m+1}$ such that the corresponding $m+1$ vectors $\exp_{\bar s}^{-1}(s_i)$ do not lie in any closed half-space of $T_{\bar s} M$, then $D_\Sigma$ is isometric to a closed ball of radius $R_\lambda$ in the model space of curvature $c$.

\item[(ii)]
Otherwise, the cone $\mathcal C(\bar s, \Sigma_x) \subset M$ over $\Sigma_s$ with the vertex at $\bar s$ is isometric to a cone in the model space of curvature $c$ with the vertex at the center of a sphere of radius $R_\lambda$ over a convex domain in this sphere.  
\end{enumerate}

Both alternatives (i) and (ii) can occur.

\end{MainTheorem}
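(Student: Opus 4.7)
The plan is to use the sharp radial angle comparison theorem, which the introduction identifies as the paper's main technical tool, taking $\bar s$ as the reference point. First I would observe that the injectivity condition $\inj(M) > 2R_\lambda$ ensures $\bar s$ is well-defined along the inward normal geodesic, that $\overline{B(\bar s, R_\lambda)}$ sits inside a normal coordinate chart centered at $\bar s$, and that this closed ball is geodesically convex. This reduces the problem to a local statement in a ball whose model counterpart is precisely the $R_\lambda$-ball in $M(c)$ bounded by a totally umbilical sphere of curvature $\lambda$. In the model situation equality holds everywhere, so all one has to do is show no $\lambda$-convex $\Sigma$ under $\sec(M) \geqslant c$ can go further out than this extremal model.

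For the inclusion in Part I, I would work with the radial distance $\rho(q) := d(\bar s, q)$ restricted to $\Sigma$ and the radial angle $\theta(q)$ between $-\nabla \rho(q)$ and the inward unit normal $\nu(q)$. By construction $\rho(s) = R_\lambda$ and $\theta(s) = 0$, matching the model sphere of radius $R_\lambda$ centered at $\bar s$ identically at $s$. The sharp radial angle comparison theorem applied along curves in $\Sigma$ emanating from $s$ says that, under $\sec(M) \geqslant c$ and $\lambda$-convexity, the pair $(\rho, \theta)$ is dominated pointwise by its model counterpart; since the latter is identically $(R_\lambda, 0)$, this forces $\rho \leqslant R_\lambda$ and $\theta \equiv 0$ wherever the comparison applies. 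A standard open–closed argument on the set $\Sigma \cap \overline{B(\bar s, R_\lambda)}$ (open because normal curvatures are bounded below by $\lambda$, so $\Sigma$ curves strictly away from $\partial B(\bar s, R_\lambda)$; closed by continuity) together with the connectedness of $\Sigma$ then gives $\Sigma \subseteq \overline{B(\bar s, R_\lambda)}$. Convexity of both the geodesic ball and of $D_\Sigma$ upgrades this inclusion from the boundary to the whole domain.

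For Part II, I would invoke the rigidity clause of the radial angle comparison theorem. If $\Sigma_s$ contains some $q \neq s$, then along any path in $\Sigma_s$ from $s$ to $q$ the pointwise model inequality must saturate at every step, and rigidity propagates this equality to the geodesic segments $[\bar s, s]$, $[\bar s, q]$ and the arc of $\Sigma$ between them, producing an isometry of the corresponding sector onto a model $R_\lambda$-sector in $M(c)$. Gluing these sectors over all of $\Sigma_s$ yields an isometry from the cone $\mathcal C(\bar s, \Sigma_s)$ onto a cone in $M(c)$ with vertex at the center of an $R_\lambda$-sphere and base a convex subset of that sphere; this is alternative (ii). Alternative (i) is precisely the degenerate case when the base directions $\exp_{\bar s}^{-1}(s_i)$ fill out all of $T_{\bar s} M$ (i.e.\ lie in no closed half-space), in which case the cone is the whole $R_\lambda$-ball and the isometry is genuinely global.

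The hardest step will be the rigorous execution of the propagation step: turning the local pointwise comparison $(\rho, \theta) \preceq (R_\lambda, 0)$ into a global inclusion when $\Sigma$ may be tangent to $\partial B(\bar s, R_\lambda)$ non-transversally (which is exactly the rigidity regime), and, for Part II, ensuring that sector-wise rigidity glues coherently along such a tangency locus into a single isometric embedding of the cone. The role of $\inj(M) > 2R_\lambda$ throughout is to keep all the comparison geodesics, angles, and exponential preimages unambiguously defined and continuously dependent on their endpoints.
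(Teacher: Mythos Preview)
Your central move---taking $\bar s$ as the origin for the radial angle comparison---does not work, and this is precisely why the paper proceeds differently. The Radial Angle Comparison Theorem (Theorem~\ref{RACThm}) requires the origin $p$ to lie in the \emph{interior} $D_\Sigma \setminus \Sigma$, with the model origin $p_\lambda$ placed so that $\dist(p_\lambda,\mathcal S_\lambda)=\dist(p,\Sigma)$. You have no guarantee that $\bar s \in D_\Sigma$ at all, and even if it is, you do not know $\dist(\bar s,\Sigma)=R_\lambda$; that equality is essentially what you are trying to prove. Worse, if you force the model origin to be the center of the $R_\lambda$-sphere, the comparison becomes degenerate: every point of $\mathcal S_\lambda$ is at distance exactly $R_\lambda$ and $\phi_\lambda\equiv 0$, so the gradient of $\di_{p_\lambda,\mathcal S_\lambda}$ vanishes identically and there are no integral trajectories along which the ODE~(\ref{borform}) can be run. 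The theorem compares angles at \emph{equal} radial distances; it does not, by itself, bound the radial distance, so your claim that ``$(\rho,\theta)\preceq(R_\lambda,0)$'' does not follow.

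The paper's proof avoids this by choosing, for each target point $q\in\Sigma$, an interior origin $p$ with $\dist(p,\Sigma)=|pq|=\epsilon$ small, and applying the radial angle comparison at the point $s$ (not at $q$) to obtain $\phi(s)\le\phi_\lambda(s_\lambda)$. The inclusion then comes from a second comparison you do not mention: Toponogov's triangle comparison applied to $\triangle \bar s\, s\, p$ against a model triangle in $M(c)$, which converts the angle bound at $s$ into $|\bar s\, p|\le R_\lambda-\epsilon$, after which the triangle inequality gives $|\bar s\, q|\le R_\lambda$. The equality analysis in Part~II likewise tracks the equality cases of \emph{both} the radial angle comparison and Toponogov's theorem (plus equality in the triangle inequality), and it is the Toponogov rigidity that produces the totally geodesic constant-curvature sector containing $\bar s$. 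Your rigidity sketch, being built on the degenerate $\bar s$-centered comparison, inherits the same gap.
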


\begin{figure}[h]
\includegraphics[width=0.5\textwidth, trim = 20 23 20 23, clip]{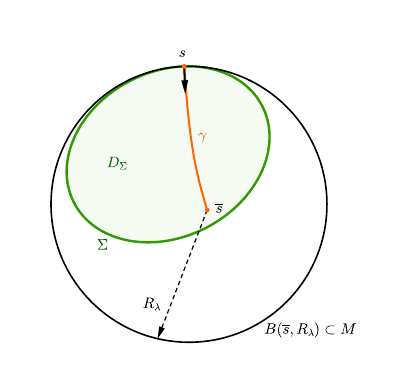}
\caption{Riemannian Blaschke's ball rolling theorem.}
\label{Fig:StatementThmA}
\end{figure}

An immediate corollary to Theorem~\ref{Thm:convexforballs} is the following diameter comparison result for $\lambda$-convex domains.

\begin{MainCorollary}[Diameter comparison for $\lambda$-convex domains]
\label{Cor:Cor1ThmA}
If $D \subset M$ be a $\lambda$-convex domain in a Riemannian manifold $M$ with $\sec(M) \ge c$, both satisfying the assumptions of Theorem~\ref{Thm:convexforballs}, then
\begin{equation*}
\diam(D) \le 2 \ct_c^{-1}(\lambda) = \left\{
\begin{aligned}
&\frac{2}{\sqrt{c}} \cot^{-1}\left(\frac{\lambda}{\sqrt{c}}\right), &\quad \text{if } c > 0,\\
&\frac{2}{\lambda},  &\quad \text{if } c = 0,\\
&\frac{2}{\sqrt{-c}} \coth^{-1}\left(\frac{\lambda}{\sqrt{-c}}\right), &\quad \text{if } c < 0 &\text{ (and $\lambda > \sqrt{-c}$)}.\\
\end{aligned}
\right. 
\end{equation*} 
In particular, when $c > 0$, we have that $\diam(D) \le \pi/\sqrt{c}$.
\qed
\end{MainCorollary}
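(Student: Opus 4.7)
The plan is to apply Theorem~\ref{Thm:convexforballs} directly; the corollary is essentially a restatement of the rolling inclusion in metric terms, so I expect no substantive obstacle beyond unpacking the definition of $R_\lambda$. Concretely, I would pick any boundary point $s \in \Sigma = \partial D$ and let $\bar s$ be the endpoint of the inward geodesic of length $R_\lambda$ from $s$, as in the statement of Theorem~\ref{Thm:convexforballs}. The inclusion \eqref{Eq:ballincl} then gives $D \subseteq B(\bar s, R_\lambda)$, so every point of $D$ is within Riemannian distance $R_\lambda$ of $\bar s$.

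The diameter bound then follows from the triangle inequality for the distance $d$ on $M$: for any $x, y \in D$,
\[
d(x,y) \;\le\; d(x, \bar s) + d(\bar s, y) \;\le\; R_\lambda + R_\lambda \;=\; 2 R_\lambda,
\]
hence $\diam(D) \le 2 R_\lambda$. To match the explicit formula in the statement, I would simply unpack $R_\lambda$: by definition it is the radius of a totally umbilical sphere of normal curvature $\lambda$ in the model space $M(c)$, and such a sphere has normal curvature $\ct_c(R_\lambda)$, so $R_\lambda = \ct_c^{-1}(\lambda)$. Inserting the three case definitions of $\sn_c$ and $\ct_c$ and inverting case by case yields the $\cot^{-1}$, reciprocal, and $\coth^{-1}$ expressions displayed in the statement (in the hyperbolic case using the sphere constraint $\lambda > \sqrt{-c}$ to guarantee that $\coth^{-1}(\lambda/\sqrt{-c})$ is well-defined).

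For the final remark in the case $c > 0$, I would observe that $\lambda > 0$ and $\sqrt c > 0$ force $\cot^{-1}(\lambda/\sqrt c) \in (0, \pi/2)$, so $2 R_\lambda < \pi/\sqrt c$; this bound is of course also consistent with the Bonnet--Myers diameter bound $\diam(M) \le \pi/\sqrt c$ implied by $\sec(M) \ge c > 0$. Since the heavy lifting is entirely contained in Theorem~\ref{Thm:convexforballs}, the proof reduces to these triangle-inequality and elementary-function computations; no additional curvature comparison is needed.
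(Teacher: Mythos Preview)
Your proof is correct and matches the paper's approach: the corollary is stated with a \qed and no separate argument, since it follows immediately from the inclusion~\eqref{Eq:ballincl} via the triangle inequality exactly as you describe. The only additional content is unpacking $R_\lambda = \ct_c^{-1}(\lambda)$, which you also do correctly.
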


Another corollary to Theorem~\ref{Thm:convexforballs} is the following sharp volume comparison for $\lambda$-convex domains. It is a generalization to higher dimensions of the result due to Toponogov \cite{Top} (see also Hang--Wang \cite[Theorem 4]{HW}).

\begin{MainCorollary}[Volume comparison for $\lambda$-convex domains]
\label{Cor:Cor2ThmA}
Let $M$ be a Riemannian manifold with $\sec(M) \ge c$ and $D_\Sigma \subset M$ be a $\lambda$-convex domain, both satisfying the assumptions of Theorem~\ref{Thm:convexforballs}. Then
\[
\Vol(D_\Sigma) \le \Vol(B_\lambda), \quad \text{and} \quad \Vol(\Sigma) \le \Vol(\partial B_\lambda),
\]
where $B_\lambda \subset M(c)$ is a ball of radius $R_\lambda$ in the model space of curvature $c$. Moreover, the equality holds if and only if $D_\Sigma$ is isometric to $B_\lambda$.  
\end{MainCorollary}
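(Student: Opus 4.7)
The plan is to use Theorem~\ref{Thm:convexforballs} to reduce both comparisons to classical Bishop-type inequalities inside a single enclosing geodesic ball. Fix any point $s \in \Sigma$ and let $\bar s \in M$ be the center supplied by Theorem~\ref{Thm:convexforballs}, so that~\eqref{Eq:ballincl} gives the containment $D_\Sigma \subseteq B(\bar s, R_\lambda)$. Since $\sec(M) \ge c$ implies $\operatorname{Ric}(M) \ge (m-1)c$ and the hypothesis $\inj(M) > 2R_\lambda$ places $B(\bar s, R_\lambda)$ strictly inside the injectivity domain of $\bar s$, the full strength of Bishop's comparison will be available.

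For the volume, the inclusion yields $\Vol(D_\Sigma) \le \Vol(B(\bar s, R_\lambda))$ directly, and Bishop's classical volume comparison produces $\Vol(B(\bar s, R_\lambda)) \le \Vol(B_\lambda)$. Concatenating gives the volume half of the corollary.

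For the boundary area I would split the argument into two pieces: (i) the area version of Bishop's inequality, $\Vol(\partial B(\bar s, R_\lambda)) \le \Vol(\partial B_\lambda)$, valid under the same $\operatorname{Ric}$-bound on the ball; and (ii) a monotonicity statement $\Vol(\Sigma) \le \Vol(\partial B(\bar s, R_\lambda))$, asserting that a convex subset of a geodesic ball has boundary area no larger than that of the ball itself. In geodesic polar coordinates at $\bar s$, ingredient~(ii) is established by comparing $\Sigma$ with $\partial B(\bar s, R_\lambda)$ via a suitable projection---radial from $\bar s$ in the generic case $\bar s \in D_\Sigma$, or the nearest-point projection from $\partial B(\bar s, R_\lambda)$ otherwise---controlling its Jacobian using the convexity of $D_\Sigma$ together with the sectional curvature bound.

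For the rigidity, equality $\Vol(D_\Sigma) = \Vol(B_\lambda)$ forces both inequalities in the volume chain to saturate: $D_\Sigma = B(\bar s, R_\lambda)$ from the inclusion combined with equal volumes, and $B(\bar s, R_\lambda)$ is isometric to $B_\lambda$ by Bishop's rigidity. Consequently $\Sigma$ is the entire boundary sphere, so Theorem~\ref{Thm:convexforballs}~\textbf{II.}(i) applies and gives the isometric identification $D_\Sigma \cong B_\lambda$. The main technical obstacle I anticipate is the area-monotonicity ingredient~(ii): in Euclidean and constant-curvature spaces it follows at once from Cauchy's projection formula, but in a general manifold with only a lower sectional curvature bound it requires a careful Jacobian comparison for the chosen projection, or an appeal to synthetic Alexandrov-geometric techniques.
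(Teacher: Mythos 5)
Your overall route coincides with the paper's: fix $s$, use the inclusion~\eqref{Eq:ballincl} from Theorem~\ref{Thm:convexforballs}, compare the enclosing geodesic ball $B(\bar s, R_\lambda)$ with the model ball $B_\lambda$ via Bishop-type comparison (the paper quotes \cite[Lemma 7.1.3]{Pet}), and obtain rigidity by feeding the forced equality $D_\Sigma = B(\bar s, R_\lambda)$ back into part~\textbf{II} of Theorem~\ref{Thm:convexforballs}. Your treatment of the volume inequality and of the equality case is in line with the paper's.

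The one place where you diverge is precisely the step you flag as the technical obstacle: the monotonicity $\Vol(\Sigma) \le \Vol(\partial B(\bar s, R_\lambda))$ for a convex domain contained in a geodesic ball. The paper does not prove this; it invokes Bangert \cite{Area} for the monotonicity of boundary areas of nested convex sets in a Riemannian manifold. Your proposed substitutes do not work as sketched. The radial projection from $\bar s$ presupposes that $\bar s \in D_\Sigma$, which can fail: $\bar s$ lies at distance $R_\lambda$ from $s$ along the inward normal, while $D_\Sigma$ may be far smaller than $B(\bar s, R_\lambda)$. The nearest-point projection onto the convex set $D_\Sigma$ is \emph{not} distance-nonincreasing under a mere lower bound $\sec(M) \ge c$ when $c > 0$ (already on the round sphere, projecting onto a geodesic arc expands distances between nearby points close to the poles of that arc), so a Jacobian bound ``by convexity plus the curvature bound'' does not go through. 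Either cite the known monotonicity result \cite{Area}, as the paper does, or supply a genuinely different proof of it; as written, this ingredient is a gap. A minor additional point: the equality case for the boundary areas (as opposed to the volumes) requires saturating the chain $\Vol(\Sigma) \le \Vol(\partial B(\bar s, R_\lambda)) \le \Vol(\partial B_\lambda)$, where the first saturation again relies on the equality analysis of that same monotonicity statement rather than on a trivial measure-theoretic argument, so it deserves at least a sentence.
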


\begin{proof}
Inclusion~\eqref{Eq:ballincl} applied to $D_\Sigma$ implies that $\Vol(D_\Sigma) \le \Vol(B(\overline s, R_\lambda))$ and $\Vol(\Sigma) \le \Vol(\partial B(\overline s, R_\lambda))$ \cite{Area} for any $\overline s$. By the volume comparison for geodesic balls \cite[Lemma 7.1.3]{Pet}, $\Vol(B(\overline s, R_\lambda)) \le \Vol(B_\lambda)$ and $\Vol(\partial B(\overline s, R_\lambda)) \le \Vol(\partial B_\lambda)$, and the claim follows. The equality case follows by the rigidity part in Theorem~\ref{Thm:convexforballs}. 
\end{proof}

In Theorem~\ref{Thm:convexforballs}, when $c<0$, we assumed that $\lambda > \sqrt{-c}$ in order to ensure that in the model hyperbolic space $\mathbb H^m(c)$ there exists a ball of curvature $\lambda$. However, as it was mentioned above, in the hyperbolic space there exist two more types of hypersurfaces that are totally umbilical, namely, horospheres and equidistants. Below, we state a version of Riemannian Blaschke's \emph{horoball} rolling theorem. We have to restrict to simply connected negatively curved manifolds in order to define horoballs and horospheres in the general Riemannian setting. 

Recall that a complete simply-connected Riemannian manifold of non-positive sectional curvature is called a \textit{Hadamard manifold}. Let $M$ be a Hadamard manifold, and let $\gamma \colon [0, +\infty) \to M$ be a unit-speed geodesic ray emanating from $p = \gamma(0)$. (Recall that a unit speed geodesic $\gamma \colon [0, +\infty) \to M$ is called a \textit{geodesics ray} if $\di(\gamma(s), \gamma(t)) = |t-s|$ for all $t, s \geqslant$ $\gamma$ and $\lim_{t\to \infty} \gamma(t)$ does not exist, where $\di$ is the distance in $M$.) Then the equivalence class of unit-speed geodesic rays $\gamma_1$ such that $\di(\gamma(t), \gamma_1(t))$ is uniformly bounded in $t$ defines an \textit{ideal boundary point} $\gamma(\infty)$ of $M$. Such rays $\gamma_1$ are called \textit{asymptotic to $\gamma$}.

For a ray $\gamma$ in $M$, we define the \textit{Busemann function} $b_\gamma$ \cite[Section 7.3.2]{Pet} as
$$
b_\gamma(q) := \lim \limits_{t \to +\infty} (t - \di(q, \gamma(t))).
$$

In a Hadamard manifold $M$, a ray $\gamma$ emanating from $p = \gamma(0)$ defines a set
$$
H(\gamma) := \{q \in M \colon b_\gamma(q) \geqslant 0\},
$$
which is called a \textit{horoball} passing through $p$ in the direction of $\gamma$. The boundary of $H(\gamma)$ is called a \textit{horosphere}. It contains the point $p$.

Finally, let $M$ be a Hadamard manifold, $\gamma$ be a ray in $M$, and $A \subset M$ be a set. We define an \textit{infinite cone over $A$ with the vertex at $\gamma(\infty)$}, denoted as $\mathcal C(\gamma(\infty), A)$, to be the set of all geodesic rays emanating from a point in $A$ and asymptotic to $\gamma$.

\begin{MainTheorem}[Riemannian Blaschke's horoball rolling theorem]
\label{Thm:convexforhoroballs}
If $M$ be a $C^2$-smooth Hadamard manifold of dimension $m \ge 2$ with $0 > \sec(M) \ge - \lambda^2$, $\lambda > 0$, then every $\lambda$-convex domain $D_\Sigma \subset M$ with $C^3$-smooth boundary can roll freely in a horoball in $M$. 

More precisely, for every point $s \in \Sigma$ if $\gamma_s \colon [0, \infty) \to M$, $\gamma(0)=s$ is the unique geodesic ray starting at $s$ with $\dot \gamma(0)$ being an inward pointing normal, then 
\begin{equation}
\label{Eq:horoballincl}
D_\Sigma \subseteq H(\gamma_s),
\end{equation}
where $H(\gamma_s) \subset M$ is the horoball passing through $s$ in the direction of $\gamma_s$.
\end{MainTheorem}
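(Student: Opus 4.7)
I would parallel the proof of Theorem~\ref{Thm:convexforballs}, with the horoball replacing the geodesic ball and the Busemann function $b := b_{\gamma_s}$ replacing the distance function from a center. Fix $s \in \Sigma$, so that $\nabla b(s) = \dot\gamma_s(0) = \nu(s)$ and $H(\gamma_s) = \{b \ge 0\}$. Under the hypothesis $0 > \sec(M) \ge -\lambda^2$, the Busemann function is of class $C^2$, $|\nabla b| \equiv 1$, and by Hessian comparison applied to $d(\,\cdot\,, \gamma_s(t))$ followed by the limit $t \to \infty$,
\[
\operatorname{Hess}(b)(X,X) \le \lambda\bigl(|X|^2 - \langle X, \nabla b\rangle^2\bigr),
\]
so every horosphere is non-strictly $\lambda$-convex with respect to its inward unit normal $\nabla b$. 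At $s$, the horosphere $\partial H(\gamma_s)$ and $\Sigma$ share their tangent hyperplane and inward normal; the comparison of second fundamental forms ($\Sigma$ has normal curvatures $\ge \lambda$, the horosphere $\le \lambda$) immediately yields the local inclusion $\Sigma \subseteq H(\gamma_s)$ in a neighborhood of $s$.

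The globalization is the substantive part. Since $|\nabla b| \equiv 1$, there are no interior critical points, so any point $q^* \in D_\Sigma$ at which $b$ dips below the level $b(s) = 0$ must be approached on $\Sigma$, with the first-order condition forcing $\nabla b(q^*) = \nu(q^*)$; in particular, the geodesic ray from $q^*$ in direction $\nu(q^*)$ is asymptotic to $\gamma_s$. The contradiction then comes from a sharp \emph{ideal-center radial angle comparison theorem} (RACT) parallel to the finite-center RACT underlying Theorem~\ref{Thm:convexforballs}: along any curve on $\Sigma$ joining $s$ to $q^*$ one tracks the angle between its tangent and the gradient $\nabla b$, and shows that it is controlled by the corresponding angle in the model computation in $\mathbb H^m(-\lambda^2)$ with a totally umbilical horosphere of curvature $\lambda$. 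The Jacobi field along a radial geodesic from a finite center (the kinematic input of RACT for the ball case) is replaced by the stable Jacobi field along $\gamma_s$, whose growth is governed by the upper bound $\lambda$ on the shape operator of horospheres, i.e.\ by the Hessian inequality above. This comparison forces $b \ge 0$ along $\Sigma$, contradicting $b(q^*) < 0$.

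The main obstacle is formulating and proving this ideal-center RACT, because the finite-center version exploits the fact that the radial variable is an honest distance function; here this must be replaced by the Busemann function, and the kinematic analysis has to be carried out with stable Jacobi fields along $\gamma_s$. A secondary technical point is the possible non-compactness of $D_\Sigma$: the infimum of $b$ on $D_\Sigma$ need not be attained, so one has to combine the above with an exhaustion (for instance by $D_\Sigma \cap \overline{B(s, R)}$ as $R \to \infty$) or a minimizing-sequence argument, using that the level sets of $b$ foliate $M$ by complete horospheres. Note that a direct reduction to Theorem~\ref{Thm:convexforballs} via the balls $B(\gamma_s(R), R)$ as $R \to \infty$ is not available, as the sphere constraint $\lambda > \sqrt{-c}$ degenerates at $c = -\lambda^2$; this is precisely why the ideal-center version of RACT has to be proved in its own right.
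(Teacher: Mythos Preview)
Your strategy is workable in principle, but it takes a genuinely harder route than the paper. You propose to develop an \emph{ideal-center} radial angle comparison, replacing the distance from a finite origin by the Busemann function and ordinary Jacobi fields by stable ones; you correctly flag this as the main obstacle. The paper avoids this entirely. It keeps the \emph{finite-center} RACT exactly as in Theorem~\ref{Thm:convexforballs}: for an arbitrary $q\in\Sigma$ it chooses an interior origin $p$ with $\dist(p,\Sigma)=|pq|=\epsilon$, and compares the radial angle $\phi(s)$ (at the given boundary point $s$, measured from $p$) with the radial angle $\phi_\lambda(s_\lambda)$ in a model horoball $\mathcal H\subset\mathbb H^m(-\lambda^2)$ measured from a matching $p_\lambda$. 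The Hadamard-manifold version of the RACT (cited from \cite{BGR01,BM02}) covers the borderline comparison to a horosphere of curvature $\lambda$, so no new ``ideal'' comparison is needed. Toponogov is then applied to the \emph{finite} triangles $\triangle\, s\, p\, \gamma_s(t)$ and $\triangle\, s_\lambda\, p_\lambda\, \gamma_{s_\lambda}(t)$ for each $t>0$, giving $|p\gamma_s(t)|\le |p_\lambda\gamma_{s_\lambda}(t)|$; a triangle inequality and the limit $t\to\infty$ convert this into $b^s(q)\ge b^{s_\lambda}(p_\lambda)-|p_\lambda q_\lambda|=0$. So your intuition that ``limit of balls $B(\gamma_s(R),R)$'' is the right picture is correct, but the limit is taken only at the very last step, after the finite-center comparison, not at the level of Theorem~\ref{Thm:convexforballs} itself.

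What the paper's approach buys: no new comparison theorem, no minimum-point or exhaustion argument (the inequality $b^s(q)\ge 0$ is obtained directly for every $q$, so non-compactness of $D_\Sigma$ is a non-issue), and no Hessian-of-Busemann analysis. What your approach would buy, if carried out: a self-contained ideal-center RACT that is conceptually cleaner and might be reusable, at the cost of reproving Proposition~\ref{borlem} with $b$ in place of $\di_p$ and stable Jacobi fields in place of ordinary ones.
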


\begin{remark}
Observe that in the statement of Theorem~\ref{Thm:convexforhoroballs}, $D_\Sigma$ may be non-compact. At the same time, in Theorem~\ref{Thm:convexforballs}, the domain $D_\Sigma$ is necessarily compact.
\end{remark}

As was mentioned in the introduction, the proofs of Theorems~\ref{Thm:convexforballs} and \ref{Thm:convexforhoroballs} are based on two comparison results, the Radial Angle Comparison theorem from \cite{BDr13}, and Toponogov's triangle comparison theorem. In section~\ref{Sec:RAC}, we provide a self-contained treatment of the former comparison theorem, thus filling some of the gaps in the existing literature. We pay a particular attention to the equality case of the Radial Angle Comparison Theorem, which is a novel part. We then combine both comparison theorems to prove Theorems~\ref{Thm:convexforballs} and \ref{Thm:convexforhoroballs} in Sections~\ref{Sec:ProofThmA} are \ref{Sec:ProofThmB} respectively. Finally, in Section \ref{Sec:OpenQuestions} we state some open problems related to Blaschke's rolling theorems in Riemannian manifolds.


\section{The radial angle comparison theorem for $\lambda$-convex domains}
\label{Sec:RAC}


In this section, we state and prove the Radial Angle Comparison theorem for $\lambda$-convex domains in Riemannian manifolds with $\sec \ge c$. We will focus on the case when $\lambda$ satisfies the sphere constraints \eqref{Eq:Sphere}. Some parts of our exposition in this section partly follow~\cite{BDr13} and \cite{DrPhD}.

Let $D_\Sigma \subset M$ be a $\lambda$-convex domain. Assume that $D_\Sigma$ is a convex set, i.e., any two points in $D_\Sigma$ can be joint by a geodesic segment within $D_\Sigma$. For example, this is true when $M$ is $C^2$-smooth and complete and $\Sigma$ is $C^3$-smooth \cite{Convexity}, which we will assume from now on. 

Fix a point $p \in D_\Sigma$, called \emph{the origin}. Define $\Sigma_p = \Sigma \cap \mathring{B}(p, \inj_p(M))$ and $D_{\Sigma_p} = \text{cl}\left(D_\Sigma \cap \mathring{B}(p, \inj_p(M))\right)$ to be the part of $D_\Sigma$ that lies within the open ball of the injectivity radius of $M$ at $p$ (here $\text{cl}(\cdot)$ is the closure of a set). 

For the origin $p$, we can define the \emph{radial angle function} $\phi \colon \Sigma_p \to [0, \pi)$ as follows. Pick a point $q \in \Sigma_p$. There exists a unique distance minimizing geodesic $\gamma_{pq} \colon [0,1] \to M$ such that $\gamma_{pq}(0) = p$, $\gamma_{pq}(1) = q$. Then, by definition, $\phi(q)$ is the smallest angle between $\dot\gamma_{pq}(1)$ and $-\nu(p)$:
$$
\phi(q) := \angle\left(\dot \gamma_{pq}(1), -\nu(q)\right),
$$
where $\nu(q)$ is the inward pointing normal to $\Sigma_p$ at $q$. (The radial angle function depends on the choice of the origin $p$. We will suppress this dependence where it does not cause a confusion.)

Denote by $\dist \colon M \times M \to \mathbb R_{+}$ the distance function on the manifold $M$. We will also use the notation $|pq| := \dist(p,q)$. For the fixed origin point $p \in \Sigma$, define the restricted distance function $\di_p (\cdot) := \dist(p, \cdot)$ with respect to $p$. The restricted distance function $\di_p$ is $C^2$-smooth in some small neighborhood $\mathcal U_p$ of $D_{\Sigma_p}$ punctured at $p$. 

We can define a smooth restriction $\di_{p, \Sigma} \colon \Sigma_p \to \mathbb R_{+}$ by putting 
$$
\di_{p, \Sigma}(q):=\di_p(q) = \dist(p,q) \text{ for any } q \in \Sigma_p.
$$

Since $\di_p$ is smooth in $\mathcal U_p$, we can define a gradient vector field $\grad_M \di_p$, which is well-defined smooth section of $T\mathcal U_p$. Integral trajectories of this vector field are radial geodesics emanating from $p$. Likewise, we can define a gradient vector field $\grad_\Sigma \di_{p, \Sigma}$, which is a smooth section of $T\Sigma_p \subset T\mathcal U_p$.

It is straightforward to see that for each $q \in \Sigma_p$ the both gradients and the radial angle are related by
\begin{equation}
\label{orthogonal}
\grad_M \di_p = \grad_\Sigma \di_{p, \Sigma} - \cos \phi \cdot \nu \quad \text{in} \quad T_qM.
\end{equation}
This decomposition means that the gradient of the restricted distance function is just the orthogonal projection of the gradient of the globally defined distance function onto $T_q \Sigma$ viewed as a subspace of $T_q M$. 

For any $t>0$, denote by $\mathcal S(t)$ the geodesic sphere of radius $t$ centered at $p$. Note that $\grad_M \di_p$ is a unit normal vector field to $\mathcal S(t) \cap \mathcal U_p$. Since $\left|\grad_M \di_p\right| = 1$, from~(\ref{orthogonal}) we get
$$
\left|\grad_\Sigma \di_{p, \Sigma}\right| = \left|\sin \phi \right|.
$$ 
Therefore, $\grad_\Sigma \di_{p, \Sigma}$ vanishes exactly at those points $q$ on $\Sigma_p$ where $\grad_M \di_p (q)$ and $-\nu(p)$ coincide. Equivalently, $\grad_\Sigma \di_{p, \Sigma}$ vanishes at those $q \in \Sigma_p$ where $\Sigma_p$ touches $\mathcal S(|pq|)$. 

Suppose $\phi(q) \neq 0$. Then in a neighborhood of $q$ in $\Sigma_p$ we have a well-defined unit vector field
$$
X := \frac{\grad_\Sigma \di_{p,\Sigma}}{\left|\grad_\Sigma \di_{p,\Sigma}\right|}.
$$
Let $\gamma \colon (-\epsilon, \epsilon) \to \Sigma$ be the unique integral trajectory of $X$ passing through $q$ (see Figure~\ref{Fig:Vectors}):
$$
\gamma(0) = q, \quad \dot\gamma = X.
$$
The curve $\gamma$ can be uniquely extended in both direction until it either hits the possibly non-empty boundary of $\Sigma_p$, or it hits the point where $\phi$ vanishes. We will call such a curve the \emph{maximal integral trajectory}.

\begin{figure}[h]
\definecolor{uququq}{rgb}{0.25,0.25,0.25}
\begin{tikzpicture}[line cap=round,line join=round,>=triangle 45,x=1.0cm,y=1.0cm,scale=1.9]
\clip(-2.3,-2.1) rectangle (1.9,2.2);
\fill[line width=0.4pt,dotted,fill=black,fill opacity=0.05] (-0.06,2.04) -- (1.6,1.78) -- (1.33,0.02) -- (-0.33,0.28) -- cycle;
\draw [shift={(0.64,1.03)},line width=0.4pt,fill=black,fill opacity=0.3] (0,0) -- (27.92:0.12) arc (27.92:81.15:0.12) -- cycle;
\draw [rotate around={38.35:(-0.32,-0.23)},line width=1.2pt] (-0.32,-0.23) ellipse (1.98cm and 1.45cm);
\draw [shift={(-0.43,0.22)},line width=0.4pt,dash pattern=on 1pt off 1pt]  plot[domain=0.23:1.82,variable=\t]({0.95*1.65*cos(\t r)+-0.31*0.69*sin(\t r)},{0.31*1.65*cos(\t r)+0.95*0.69*sin(\t r)});
\draw [dash pattern=on 1pt off 1pt] (-0.64,0.36)-- (0.64,1.03) node[midway, below, xshift=8pt, yshift=-2pt] {$t\!=\!\di_p(q)$};
\draw [->] (0.64,1.03) -- (0.75,1.74) node[left] {$-\nu$};
\draw [->] (0.64,1.03) -- (1.27,1.37) node[above, yshift=-2pt] {$\grad_M \di_p$};
\draw [->] (0.64,1.03) -- (1.34,0.92) node[above right, xshift=-2pt, yshift=-2pt] {$X$};
\draw [->] (0.64,1.03) -- (0.97,0.4) node[right] {$Y$};
\draw [line width=0.4pt,dotted] (-0.06,2.04)-- (1.6,1.78);
\draw [line width=0.4pt,dotted] (1.6,1.78)-- (1.33,0.02);
\draw [line width=0.4pt,dotted] (1.33,0.02)-- (-0.33,0.28);
\draw [line width=0.4pt,dotted] (-0.33,0.28)-- (-0.06,2.04);
\draw [line width=0.4pt,fill=black,fill opacity=0.02] (-0.64,0.36) circle (1.44cm);
\draw (0.8,-1.4) node[] {$\Sigma$};
\draw (0.1,-1.1) node[] {$\mathcal S(t)$};
\fill [color=black] (-0.64,0.36) circle (1.0pt) node[below] {$p$};
\draw[color=black] (-0.3,1.06) node {$\gamma$};
\fill [color=black] (0.64,1.03) circle (1.0pt) node[below, yshift=-2pt, xshift=-2pt] {$q$};
\draw[color=black] (0.78,1.21) node {$\phi$};
\end{tikzpicture}
\caption{The radial angle and the associated vector fields.}
\label{Fig:Vectors}
\end{figure}
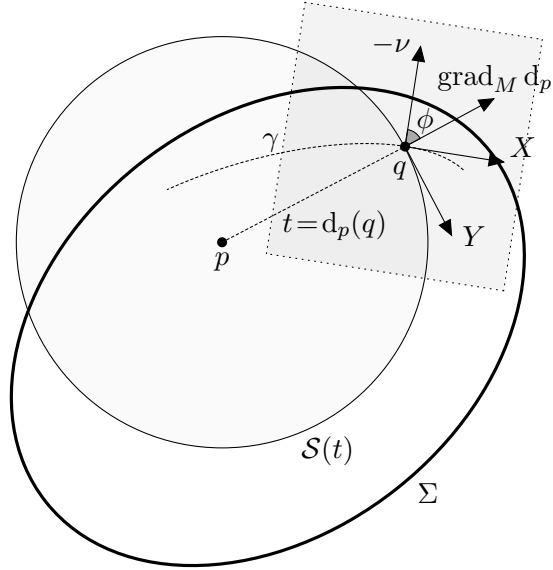

Denote by $s$ the parameter on the maximal integral trajectory $\gamma$ for $X$. Since $X$ is unit, $s$ is an arc-length parameter. We want to consider re-parametrization of $\gamma$, namely, suppose
\begin{equation}
\label{eqtpar} 
t(s) := \di_{p, \Sigma} (\gamma(s)).
\end{equation}
Since $\grad_\Sigma \di_{p, \Sigma}$ doesn't vanish along $\gamma$, we can change our parametrization from $s$ to $t$ on the whole maximal integral trajectory.

Introduce an auxiliary unit vector field $Y$ such that 
$$
\gamma(t) \mapsto Y(t) \in \spa\{\nu,\grad_M \di_p, X\}_{\gamma(t)} \subset T_{\gamma(t)}M,
$$
$Y(\gamma(t))$ is perpendicular to $\grad_M \di_p (\gamma(t))$ and making the angle $\phi(\gamma(t))$ with $X(\gamma(t))$ (see Figure~\ref{Fig:Vectors}). Therefore, $Y$ is a well-defined along $\gamma$ vector field that is tangent to $\mathcal S(t)$. 

The following formula, due to Borisenko, is a multidimensional generalization of Liouville's type formula~\cite[Ch. 4-4, Prop. 4]{doC76}, and, to best of our knowledge, was first obtained in~\cite{BGR01} (see also~\cite[Lem.~2.2]{AB98}, where the authors study a similar construction).

\begin{proposition}[\cite{BGR01, Bor02}]
\label{borlem}
In the notations above, if $\gamma(t) \subset \Sigma_p$ is an integral trajectory of the vector field $X$ parameterized by the distance parameter $t$ from $p$, then for any $t$ at the point $q = \gamma(t)$ we have the identity
\begin{equation}
\label{borform}
k^{\nu} \left(q, X\right) =  \mu^{-\grad_M \di_p}\left(q,Y\right) \cdot \cos \phi  + \frac{d}{dt} \cos \phi,
\end{equation}
where $\mu^{-\grad_M \di_{p}}(q,Y)$ is the normal curvature of the sphere $\mathcal S(t)$ at the point $q$ in the direction $Y(q)$ with respect to the unit normal $-\grad_M \di_p (q)$. 
\end{proposition}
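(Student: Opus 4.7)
The plan is to compute $\frac{d}{dt}\cos\phi$ along the integral trajectory $\gamma$ directly from the definition of $\phi$ using the ambient Levi-Civita connection, and then to match the result against the two terms on the right-hand side of \eqref{borform} via a rotational change of frame. Write $N := \grad_M \di_p$ for brevity. From the decomposition \eqref{orthogonal} and the identity $\left|\grad_\Sigma \di_{p,\Sigma}\right| = \sin\phi$, I would first record the two orthonormal frames at each point of $\gamma$, both spanning the same $2$-plane $\spa\{X, \nu\} = \spa\{N, Y\}$:
\begin{align*}
N &= \sin\phi \cdot X - \cos\phi \cdot \nu, & Y &= \cos\phi \cdot X + \sin\phi \cdot \nu, \\
X &= \sin\phi \cdot N + \cos\phi \cdot Y, & \nu &= -\cos\phi \cdot N + \sin\phi \cdot Y.
\end{align*}
In particular $\cos\phi = -\langle N, \nu\rangle$ and, parameterising $\gamma$ by arc length $s$, $dt/ds = \langle X, N\rangle = \sin\phi$, which is non-zero throughout the interior of the maximal integral trajectory.

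Differentiating $\cos\phi = -\langle N, \nu\rangle$ along $\gamma$ by the Leibniz rule for the ambient connection gives
$$
\frac{d\cos\phi}{ds} = -\langle \nabla_X N, \nu\rangle - \langle N, \nabla_X \nu\rangle.
$$
For the second term, $\nabla_X \nu$ is tangent to $\Sigma$, and the Weingarten identity yields $\langle X, \nabla_X \nu\rangle = -k^\nu(q,X)$; inserting the orthogonal decomposition of $N$ shows that only its tangential part $\sin\phi\cdot X$ contributes, so $\langle N, \nabla_X \nu\rangle = -\sin\phi \cdot k^\nu(q, X)$. For the first term, substitute $X = \sin\phi\cdot N + \cos\phi \cdot Y$ and use $\nabla_N N = 0$ (since radial curves are geodesics) to obtain $\nabla_X N = \cos\phi\cdot \nabla_Y N$. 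Now $\langle \nabla_Y N, N\rangle = 0$ because $|N|\equiv 1$, and the standard identification of $\nabla^2 \di_p$ restricted to $T\mathcal S(t)$ with the second fundamental form of $\mathcal S(t)$ with respect to the inward normal $-N$ gives $\langle \nabla_Y N, Y\rangle = \mu^{-N}(q, Y)$. Pairing with $\nu = -\cos\phi\cdot N + \sin\phi\cdot Y$ then produces $\langle \nabla_X N, \nu\rangle = \cos\phi\sin\phi\cdot \mu^{-N}(q, Y)$.

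Combining the two contributions and dividing by $ds/dt = 1/\sin\phi$ (which is legitimate since $\phi \neq 0$ along $\gamma$) yields
$$
\frac{d\cos\phi}{dt} = k^\nu(q, X) - \cos\phi\cdot \mu^{-N}(q, Y),
$$
which rearranges to \eqref{borform}. The proof is essentially bookkeeping, and the main thing to keep consistent is the orientation of $\nu$ (inward) together with the two sign conventions for $k^\nu$ and $\mu^{-N}$; the only genuinely geometric inputs are the vanishing $\nabla_N N = 0$ for radial geodesics from $p$ and the Hessian-second fundamental form identification for geodesic spheres, both of which hold in the smooth ball around $p$ where $\di_p$ is $C^2$.
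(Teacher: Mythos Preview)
Your argument is correct and rests on the same ingredients as the paper's proof (the rotational frame $\{N,Y\}$ versus $\{X,\nu\}$, the identity $\nabla_N N = 0$ for radial geodesics, and the Weingarten relation for geodesic spheres), but you organise the computation differently. The paper starts from $k^\nu(q,X) = \langle \nabla_X X, \nu\rangle$, expands $X = \sin\phi\, Z + \cos\phi\, Y$ inside the connection, and collects terms; this forces it to extend both $X$ and $Y$ smoothly to a neighbourhood of $q$ in $M$, and in particular to choose the extension of $Y$ so that $\langle Y, Z\rangle \equiv 0$ in order to kill the cross term $\langle \nabla_Z Y, Z\rangle$. You instead differentiate $\cos\phi = -\langle N, \nu\rangle$ directly, so that only $\nabla_X N$ and $\nabla_X \nu$ appear; since $N = \grad_M \di_p$ is globally defined and $\nu$ is defined along all of $\Sigma$, no auxiliary extensions are needed and the bookkeeping is correspondingly shorter. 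One small slip of phrasing: to pass from $d\cos\phi/ds$ to $d\cos\phi/dt$ you should \emph{multiply} by $ds/dt = 1/\sin\phi$ (equivalently, divide by $dt/ds = \sin\phi$), not ``divide by $ds/dt$''; your displayed conclusion is nonetheless correct.
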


\begin{proof}
The proof of this fact that was given in \cite{BGR01} and \cite{Bor02} contains the same computational inaccuracy. Because of that, we provide a complete proof here.

 Suppose $\nabla$ is a Riemannian connection on $M$. We can smoothly extend both unit vector fields $X$ and $Y$ in some neighborhood of $q$ in $M$; we will keep the same notation. The result does not depend on the extension. For brevity, denote $Z := \grad_M \di_p$.

From the Gauss decomposition it follows that 
\begin{equation}
\label{preqnormc}
k^\nu (q, X) = \left<\nabla_X X, \nu\right>, \quad \mu^{-Z} (q, Y) = -\left<\nabla_Y Y, Z\right>
\end{equation}
for unit $X$ and $Y$.
  
We proceeds with a straightforward calculation. By the construction of vector fields we have the following vector decompositions:

\begin{equation}
\label{riemdec}
\begin{aligned}
X &=  \sin \phi \cdot Z + \cos \phi \cdot Y \\
\nu &= - \cos \phi \cdot Z + \sin \phi \cdot Y .
\end{aligned}
\end{equation}

Using~(\ref{preqnormc}) and~(\ref{riemdec}) we compute:
\begin{equation*}
\begin{aligned}
k^{\nu} \left(q, X\right) &= \left<\nabla_X X, \nu\right> \\
&= \left(\cos \phi  \left<Z, \nu \right> - \sin \phi  \left<Y,\nu\right>\right)  X(\phi)  + \sin \phi \left<\nabla_X Z, \nu\right> + \cos \phi \left<\nabla_X Y, \nu\right> \\
&= -X(\phi) +  \cos^2 \phi \left<\nabla_Y Y, \nu\right> + \cos \phi \sin \phi \left( \left<\nabla_Y Z, \nu \right> + \left<\nabla_Z Y, \nu\right>\right)  
\end{aligned}
\end{equation*}
(here we used the fact that the integral curves for $Z$ are geodesics, and thus $\nabla_Z Z = 0$).

Recall that $Z$ and $Y$ are unit vector fields, and thus $\left<\nabla_W Y, Y\right>=\left<\nabla_W Z, Z\right>=0$ for any vector $W \in T_q \Sigma$. Substituting $\nu$ from~(\ref{riemdec}) we obtain:
\begin{equation*}
\begin{aligned}
k^{\nu} \left(q, X\right) &= - X(\phi) - \cos^3 \phi \left<\nabla_Y Y, Z\right> \\&+ \cos \phi \sin \phi \left( \sin \phi \left<\nabla_Y Z, Y \right>  - \cos \phi \left<\nabla_Z Y, Z\right>\right).
\end{aligned}
\end{equation*}

We can arrange at extension in a neighborhood of $q$ in $M$ of the initial vector field $Y$ in such a way that $\left<Y, Z\right>=0$ (again, we keep the same notation for the extension). Then $\left<\nabla_Z Y, Z\right> = \left<Y, \nabla_Z Z\right> = 0$ and 
$\left<\nabla_Y Z, Y \right> = -\left<Z, \nabla_Y Y\right>$. Altogether we obtain:
\begin{equation*}
k^{\nu} \left(q, X\right) = - X(\phi) - \cos \phi \left<\nabla_Y Y, Z\right> = X(\phi) + \mu^{-Z}(q, Y) \cdot \cos \phi.
\end{equation*}

Finally, observe that $$X(\phi) = X(t) \cdot \frac{d\phi}{dt} = \sin \phi \cdot \frac{d\phi}{ dt} = - \frac{d}{dt} \cos \phi$$ because of ~(\ref{eqtpar}) and the first decomposition in~(\ref{riemdec}). Proposition~\ref{borlem} is proved.
\end{proof}

Proposition~\ref{borlem} allows us to establish the following comparison-type result that we call the \emph{Radial Angle Comparison Theorem} (compare Figure~\ref{Fig:RAC}).

\begin{theorem}[Radial Angle Comparison Theorem for $\lambda$-convex domains]
\label{RACThm}
Let $M$ be a $C^2$-smooth complete Riemannian manifold of dimension $m \ge 2$ and with 
$$
\sec(K) \geqslant c.
$$
Suppose that $\lambda \in \mathbb R$ satisfies the sphere constraints \eqref{Eq:Sphere}, and assume that $\inj(M) > 2R_\lambda$ for the radius $R_\lambda$ corresponding to $\lambda$. Let $D_\Sigma \subset M$ be a $\lambda$-convex domain with $C^3$-smooth boundary and $D_{\mathcal S_\lambda} \subset M(c)$ be a ball of radius $R_\lambda$, with $\mathcal S_\lambda := \partial D_{\mathcal S_\lambda}$. Choose a point $p \in D_\Sigma \sm \Sigma$ at distance $\dist(p, \Sigma) = |ps| < \inj_p(M)$ for some $s \in \Sigma$, and let $\phi \colon \Sigma_p \to [0, \pi)$ be the radial angle function defined on $\Sigma_p$ with respect to $p$. Furthermore, let $p_\lambda \in D_{\mathcal S_\lambda} \setminus \mathcal S_\lambda$ be a point such that 
$$
\dist(p, \Sigma) = \dist(p_\lambda, \mathcal S_\lambda),
$$
and let $\phi_\lambda \colon \mathcal S_\lambda \to [0, \pi)$ be the radial angle function with respect to $p_\lambda$.

Then for all pairs of points $q \in \Sigma_p$, $q_\lambda \in \mathcal S_\lambda$ with 
$$
|pq| = |p_\lambda q_\lambda|,
$$
for the radial angles satisfy
\begin{equation}
\label{actheq1}
\phi(q) \leqslant \phi_\lambda(q_\lambda).
\end{equation}

Moreover, if we have equality in~(\ref{actheq1}), then the maximal integral trajectory $\gamma \colon [0,1) \to \Sigma_p$ of the distance function $\di_{p, \Sigma} \colon \Sigma \to (0, +\infty)$ with $q = \gamma(0)$ terminates at some $s_1 = \lim_{r \to 1-0} \gamma(r)$ with $|ps_1| = |ps|$, and the cone $\mathcal C(p, \gamma[0,1])$ over $\gamma[0,1] = \gamma[0,1) \cup s_1$ with the vertex at $p$ is a totally geodesic 2-dimensional submanifold in $M$ isometric to a curvilinear triangle in a disk of radius $R_\lambda$ in $M^2(c)$ bounded by two geodesics and an arc of the boundary of the disk. 

\end{theorem}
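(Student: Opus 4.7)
\emph{Setup.} The plan is to establish \eqref{actheq1} by a scalar ODE comparison along a single tangential trajectory, and to extract the equality case from the rigidity of that comparison. Fix $q \in \Sigma_p$ with $\phi(q) > 0$ (the statement being trivial otherwise) and consider the maximal integral trajectory $\gamma$ of $-X$ starting at $q = \gamma(0)$ in $\Sigma_p$, reparametrised by $t = \di_{p,\Sigma}(\gamma(t))$. By $\lambda$-convexity of $\Sigma$ and convexity of $D_\Sigma$, the tangential gradient can only vanish at a closest point of $\Sigma_p$ to $p$; hence $\gamma$ terminates at some $s_1 \in \Sigma_p$ with $|ps_1| = |ps|$ and $\phi(s_1) = 0$. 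Set up the analogous model trajectory $\gamma_\lambda$ on $\mathcal S_\lambda$ from $q_\lambda$ to a closest point $s_{1,\lambda}$; by rotational symmetry, $\cos \phi_\lambda$ depends only on $t$.

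\emph{ODE comparison.} Writing $f(t) := \cos\phi(\gamma(t))$ and $g(t) := \cos\phi_\lambda(\gamma_\lambda(t))$, Proposition~\ref{borlem} applied along $\gamma$ and $\gamma_\lambda$ yields the two linear ODEs
\[
f'(t) + \mu^{-\grad_M \di_p}(\gamma(t), Y)\,f(t) = k^\nu(\gamma(t), X), \qquad g'(t) + \ct_c(t)\,g(t) = \lambda,
\]
with $f(|ps|) = g(|ps|) = 1$. The $\lambda$-convexity of $\Sigma$ gives $k^\nu(\gamma(t), X) \ge \lambda$, while $\sec(M) \ge c$ together with the Hessian comparison for the distance function from $p$ gives $\mu^{-\grad_M \di_p}(\gamma(t), Y) \le \ct_c(t)$. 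Subtracting the two equations produces
\[
(f-g)' + \mu^{-\grad_M \di_p}(\gamma(t), Y)\,(f-g) = \bigl(k^\nu(\gamma(t),X)-\lambda\bigr) + \bigl(\ct_c(t) - \mu^{-\grad_M \di_p}(\gamma(t), Y)\bigr)\,g(t),
\]
whose right-hand side is non-negative thanks to the two comparison inequalities together with the easily verified positivity $g(t) > 0$ on $[|ps|, |pq|]$ in the model. A Gronwall-type integration from $t = |ps|$, where $f - g = 0$, yields $f(t) \ge g(t)$ on $[|ps|, |pq|]$, i.e.\ $\phi(\gamma(t)) \le \phi_\lambda(t)$, which at $t = |pq|$ is precisely \eqref{actheq1}.

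\emph{Equality and rigidity.} Suppose $\phi(q) = \phi_\lambda(q_\lambda)$. Then the comparison is saturated at $t = |pq|$, and combined with $f \ge g$ on the whole interval, the strict version of Gronwall forces $f \equiv g$ on $[|ps|, |pq|]$. Since the two non-negative correction terms above sum to $0$ and $g(t) > 0$, both vanish: $k^\nu(\gamma(t), X) \equiv \lambda$ and $\mu^{-\grad_M \di_p}(\gamma(t), Y) \equiv \ct_c(t)$ along $\gamma$. The second identity is the saturation of Hessian comparison along each radial geodesic $\gamma_{p,\gamma(t)}$; the rigidity case of the Rauch/Jacobi-field comparison theorem then forces the sectional curvatures of the 2-planes $\spn\{\dot\gamma_{p,\gamma(t)}, Y\}$ to equal $c$ all along $\gamma_{p,\gamma(t)}$. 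Gluing these 2-planes together across $t$ by continuity and invoking Cartan's local rigidity to identify $\exp_p$ on the appropriate 2-plane in $T_pM$ with the model $\exp_{p_\lambda}$ shows that $\mathcal C(p, \gamma[0,1])$ is a totally geodesic 2-submanifold isometric to the analogous cone in $M(c)$; the latter is exactly a curvilinear triangle in a disk of radius $R_\lambda$ bounded by two radii from $p_\lambda$ and an arc of $\mathcal S_\lambda$.

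\emph{Main obstacle.} The principal difficulty is the rigidity step: one has to upgrade a scalar equality along a single curve $\gamma$ on $\Sigma$ into a genuine isometric identification of a $2$-dimensional region of $M$ with a model curvilinear triangle. This requires invoking the rigidity of Jacobi-field comparison along the entire one-parameter family of radial geodesics $\{\gamma_{p,\gamma(t)}\}_{t \in [|ps|, |pq|]}$, verifying that the pointwise sectional-curvature and second-fundamental-form information glues into a smooth totally geodesic submanifold, and then applying a Cartan-type argument to produce the global isometry with the model sector. A secondary, more technical obstacle is the careful verification that the maximal integral trajectory $\gamma$ in fact terminates at a critical point $s_1$ of $\di_{p,\Sigma}$ at the minimum distance $|ps|$, rather than exiting $\Sigma_p$ through the sphere $\partial B(p, \inj_p(M))$; this is where the hypothesis $\inj(M) > 2R_\lambda$ and convexity of $D_\Sigma$ enter.
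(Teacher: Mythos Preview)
Your approach matches the paper's: apply Proposition~\ref{borlem} along the descending integral trajectory of $\di_{p,\Sigma}$, combine $\lambda$-convexity with the sphere-curvature comparison $\mu^{-\grad_M \di_p} \le \ct_c$ (from $\sec \ge c$), and integrate the resulting linear differential inequality for $\cos\phi - \cos\phi_\lambda$. The paper uses the explicit integrating factor $\sn_c$ (rewriting the inequality as $\frac{d}{dt}\bigl((\cos\phi-\cos\phi_\lambda)\sn_c\bigr)\ge 0$) rather than your $e^{\int\mu}$, but this is cosmetic.

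There is one genuine gap. You assert up front that the descending trajectory terminates at $s_1$ with $|ps_1|=|ps|$, because ``the tangential gradient can only vanish at a closest point.'' This is false: $\di_{p,\Sigma}$ certainly has other critical points (e.g.\ maxima), and nothing rules out local minima at distances strictly larger than $\dist(p,\Sigma)$. The paper does not assume this. It only uses that the trajectory ends at some $s^*$ with $t^*:=|ps^*|\ge d:=\dist(p,\Sigma)$ and $\phi(s^*)=0$, so that the initial value satisfies $\cos\phi(t^*)=1\ge \cos\phi_\lambda(t^*)$; the Gronwall argument then runs from $t^*$, not from $d$. The stronger conclusion $t^*=d$ is \emph{derived} in the equality analysis (since $f\equiv g$ forces $g(t^*)=1$, hence $\phi_\lambda(t^*)=0$, hence $t^*=d$). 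Your proof is easily repaired along these lines, but as written the initial condition $f(|ps|)=g(|ps|)=1$ is not available.

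Your rigidity sketch is also vaguer than the paper's. Rather than ``gluing $2$-planes'' and ``Cartan's local rigidity'', the paper works directly with Jacobi fields: along each radial geodesic $\delta_t$ from $p$ to $\gamma(t)$, both the Rauch-rigidity Jacobi field $J(s)$ and the variation field $I(s)=\partial\delta/\partial t$ are shown to lie in a single parallel $2$-plane $\spn\{E_1(s),E_2(s)\}$. This is what identifies the cone $\mathcal C(p,\gamma)$ as a $2$-dimensional submanifold with $K_\sigma\equiv c$, while $k^\nu\equiv\lambda$ fixes the geodesic curvature of the boundary arc. Invoking Cartan is not quite the right mechanism here, since one is not comparing two given manifolds with matching curvature tensors but extracting a totally geodesic surface from a saturated one-parameter family of Jacobi-field comparisons.
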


\begin{figure}[h]
\begin{tikzpicture}[line cap=round,line join=round,>=triangle 45,x=1.0cm,y=1.0cm, scale = 1]
\draw (-2.78,2.6) node[anchor=north west] {$M$};
\draw (4.0,2.6) node[anchor=north west] {$M(c)$};
\clip(-3.06,-3.03) rectangle (11.46,2.14);
\draw [shift={(1.38,0.74)},fill=black,fill opacity=0.1] (0,0) -- (22.34:1.29) arc (22.34:48.04:1.29) -- cycle;
\draw (2.5,1.5) node[xshift=14pt, yshift=6pt] {$\phi(q)$};
\draw [shift={(8.46,0.03)},fill=black,fill opacity=0.1] (0,0) -- (17.69:1.29) arc (17.69:53.16:1.29) -- cycle;
\draw (10, 0.5) node[above right, xshift=-14pt, yshift=6pt] {$\phi_\lambda(q_\lambda)$};
\draw [rotate around={38.35:(-0.32,-0.23)},line width=1.2pt] (-0.32,-0.23) ellipse (1.98cm and 1.45cm);
\draw [line width=1.2pt] (6.48,-0.6) circle (2.07cm);
\draw [->] (1.38,0.74) -- (2.94,1.38) node[below, yshift=-2pt] {$-\nu$};
\draw [->] (1.38,0.74) -- (2.51,1.99);
\draw [->] (8.46,0.03) -- (9.47,1.38);
\draw [->] (8.46,0.03) -- (10.07,0.55) node[below, yshift=-2pt] {$-\nu_\lambda$};
\draw [dash pattern=on 1pt off 1pt] (7.14,-1.73)-- (7.54,-2.38);
\draw [dash pattern=on 1pt off 1pt] (7.14,-1.73)-- (8.46,0.03) node[below right] {$q_\lambda$};
\draw [dash pattern=on 1pt off 1pt] (-0.09,-0.9)-- (0.31,-1.55);
\draw [dash pattern=on 1pt off 1pt] (-0.09,-0.9)-- (1.38,0.74) node[below right, yshift=1pt] {$q$};
\draw (0.2,-1.6) node[anchor=north west] {$s$};
\draw (4.59,-0.3) node[anchor=north west] {$\mathcal S_\lambda$};
\draw (-1.91,0.05) node[anchor=north west] {$\Sigma$};
\fill [color=black] (-0.09,-0.9) circle (1.0pt) node[left] {$p$};
\fill [color=black] (0.31,-1.55) circle (1.0pt);
\fill [color=black] (1.38,0.74) circle (1.0pt);
\fill [color=black] (7.54,-2.38) circle (1.0pt);
\fill [color=black] (7.14,-1.73) circle (1.0pt) node[left] {$p_\lambda$};
\draw[color=black] (7.8,-2.6) node {$s_\lambda$};
\fill [color=black] (8.46,0.03) circle (1.0pt);
\end{tikzpicture}
\caption{The Radial Angle Comparison Theorem.}
\label{Fig:RAC}
\end{figure}
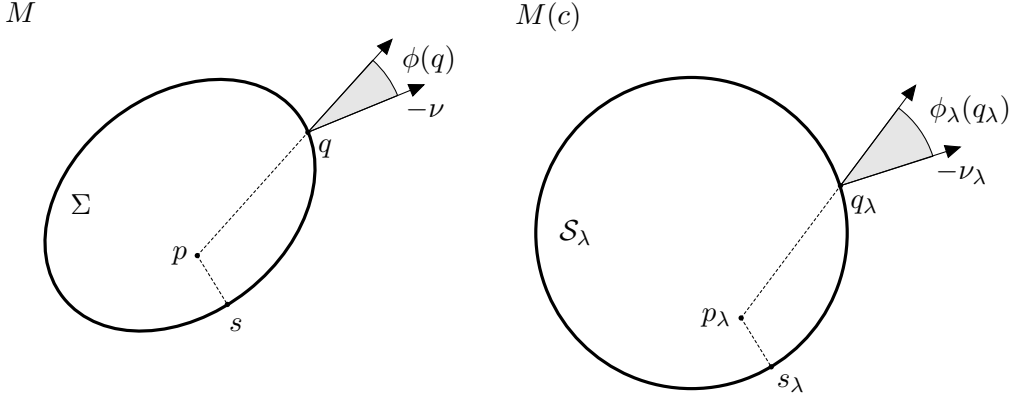

\begin{proof}
Let $s_\lambda \in \mathcal S_\lambda$ be a point such that $|p_\lambda s_\lambda| = \dist(p_\lambda, \mathcal S_\lambda) = |ps| = \dist(p,\Sigma) =: d$. By construction, $\phi(s) = \phi_\lambda (s_\lambda) = 0$.

We want to apply Proposition~\ref{borlem}. Note that the gradient $\grad_{M(c)} \di_{p_\lambda}$ vanishes on $\mathcal S_\lambda$ only at $s_\lambda$ and at the point diametrically opposite to $s_\lambda$, that is at the distance $2 R_\lambda - d$ from $p_\lambda$.

Assume first that $|pq| \neq 2 R_\lambda - d$. Then $\grad_{M(c)} \di_{p_\lambda}(q_\lambda) \neq 0$. And thus if $\grad_{M} \di_{p}(q) = 0$, then in~(\ref{actheq1}) we have a strict inequality of the form $0 < \phi(q_\lambda)\neq 0$, which is true. Therefore, we can assume $\grad_{M} \di_{p}(q) \neq 0$. We are in position of applying Proposition~\ref{borlem}.

Denote $\gamma(t)$ and $\gamma_\lambda(t)$ the maximal integral trajectories of the gradient vector fields given by Proposition~\ref{borlem} passing though $q$ and $q_\lambda$ on $\Sigma$ and $\mathcal S_\lambda$, respectively. Note that the limit points of $\gamma_\lambda$ are precisely $s_\lambda$ and its diametrically opposite point. 

By Proposition~\ref{borlem}, along $\gamma$ and $\gamma_\lambda$ we have
\begin{equation}
\label{normcF}
k^{\nu}(t) = \cos \phi (t) \cdot \mu^{\grad_M \di_p}(t) + \frac{d}{dt} \cos \phi (t),
\end{equation}
\begin{equation}
\label{normcF1}
\lambda = \cos \phi_\lambda (t) \cdot \mu^{\grad_{M(c)} \di_{p_\lambda}} (t) + \frac{d}{dt}\cos \phi_\lambda (t),
\end{equation}
where, for brevity, (in the notations from Proposition~\ref{borlem}) $k^{\nu}(t) = k^{\nu}(\gamma(t), X)$, $\mu^{\grad_M \di_p}(t) = \mu^{\grad_M \di_p}(\gamma(t), Y)$, and $\mu^{\grad_{M(c)} \di_{p_\lambda}} (t)$ is a normal curvature of a sphere of radius $t$ in the model space $M(c)$. 

Assumptions of the theorem allows us to use the comparison theorem for normal curvatures geodesic spheres (see \cite[Chapter IV, Lemma~2.9]{Sa92}). In our notations it yields
\begin{equation}
\label{normcsphcomp}
\ct_c (t) = \mu^{\grad_{M(c)} \di_{p_\lambda}} (t) \leqslant \mu^{\grad_M \di_p}(t).
\end{equation}

Let us subtract from~(\ref{normcF}) equality~(\ref{normcF1}); then using~(\ref{normcsphcomp}) and the $\lambda$-convexity condition $k^{\nu}(t) \geqslant \lambda$, we obtain
\begin{equation}
\label{mainineq2}
0  \leqslant k^{\nu}(t) - \lambda  \leqslant \left(\cos \phi(t) - \cos \phi_\lambda(t) \right) \cdot \ct_c (t)  + \frac{d}{dt} \left(\cos \phi(t) - \cos \phi_\lambda(t) \right).
\end{equation}

If we set $f(t) := \cos \phi (t) - \cos \phi_\lambda(t)$, then it follows from~(\ref{mainineq2}) that this function satisfies the following differential inequality 
\begin{equation}
\label{difineq1}
 f(t) \ct_c(t)  + \frac{d}{dt} f(t) \geqslant 0.
\end{equation}
Obviously, inequality~(\ref{difineq1}) is equivalent to
\begin{equation*}
\label{difineq2}
\frac{d}{dt}\left( f(t)\sn_c(t)\right) \geqslant 0.
\end{equation*}

Therefore, the function $f\cdot\sn_c$ is monotonically increasing. Going along $\gamma(t)$ following decreasing $t$ we will eventually arrive at a limit point, say $s^* \in \Sigma$, corresponding to some distance $t^*$ from $p$. Note that $t^* \geqslant d$ since $d$ is a strong minimum of the distance function $\di_p(\cdot)$. Clearly, $\phi(s^*) = 0$ by definition of $s^*$. Moreover, $\gamma_\lambda(t^*)$ is either equal to $s_\lambda$ with $t^*=d$, or $\gamma_\lambda(t^*)$ is an interior point of the corresponding maximal trajectory. In the first case, $f(t^*) = 0$, in the second $f(t^*) > 0$. 

In either case, $f(t^*) \geqslant 0$, and thus monotonicity of $f \cdot \sn_c$ yields~(\ref{actheq1}) at $q$ and $q_\lambda$, as claimed.

To finish the proof of inequality (\ref{actheq1}) it is left to consider the case when $|pq| = 2R_\lambda - d$. In such case $\grad_{M(c)} \di_{p_\lambda} (q_\lambda) = 0$. If $\grad_M \di_p(q) = 0$ we are done. If $\grad_M \di_p(q) \neq 0$, then there exist a maximal trajectory $\gamma(t)$ through $p$. Consider a point $q^\epsilon := \gamma(2 R_\lambda - d - \epsilon)$ for some small $\epsilon$. Then we are under conditions of the proven case, and thus obtain $\varphi(q^\epsilon) \leqslant \varphi(q^\epsilon_\lambda)$. Sending $\epsilon \to 0$ we recover $\varphi(q) \leqslant \varphi(q_\lambda) (= 0)$ by continuity, which finishes the proof of (\ref{actheq1}). 

Let us treat the equality case. Assume that $\phi(q) = \phi_\lambda(q_\lambda)$. Then since $f(t) \cdot \sn_c(t) = (\cos \phi(t) - \cos \phi_\lambda(t)) \sn_c(t)$ with $\sn_c(t) \not\equiv 0$ is a monotonically increasing function, and moreover $f(t^*) = f(|pq|) = 0$, it follows that $f(t) \equiv 0$ along the trajectory $\gamma$ between $q$ and $s^*$. But, by construction, $\phi(t^*) = 0$; hence $\phi_\lambda(t^*) = 0$, which is only possible at $s_\lambda$. Therefore $t^* = d$. Putting $s_1:=s^*$, we obtain a desired point. Let us show that the cone $\mathcal C(p, \gamma[d, |pq|])$ is isometric to a sector of a disk with radius $R_\lambda$ in $M^2(c)$.

Indeed, since $f(t) \equiv 0$ for all $t \in \left[d, |pq|\right]$, we must have equality in~(\ref{mainineq2}) for all such $t$. Taking into account~(\ref{normcsphcomp}), this is only possible when
\begin{equation}
\label{Eq:equalitycase1}
\mu^{\grad_M \di_p}(t) = \ct_c(t)
\end{equation}
and 
\begin{equation}
\label{Eq:equalitycase2}
k^\nu(t) \equiv \lambda \quad \text{for all } t \in \left[d, |pq|\right].
\end{equation}

For $t \in \left[d, |pq|\right]$, let $\delta_t \colon s \mapsto \delta_t(s)$, $s \in [0, t]$,  be the unique geodesic segment connecting $p = \delta_t(0)$ and $\gamma(t) = \delta_t(t)$ (we assume the arc length parametrization of $\delta_t$). Recall (in the notation of Proposition~\ref{borlem}) that $Y(t)$ was a unit vector field defined along $\gamma(t)$ by lying in $\spn\{\gamma'(t), \grad_M \di_p(\gamma(t))\}$ and being perpendicular to $\grad_M \di_p (\gamma(t))$. Consider the unique Jacobi field $J(s) \in T_{\delta_t(s)} M$ along $\delta_t$ such that $J(0) = 0$ and $J(t) = Y(t)$.

By the equality case in the Rauch comparison theorem (see~\cite[Chapter IV, Theorem 2.3]{Sa92}), for which the normal curvature comparison~(\ref{normcsphcomp}) is a corollary (see \cite[Chapter IV, Lemma 2.9 and Theorem 2.7]{Sa92}, we obtain that the sectional curvature $K_\sigma(x)$ of $M$ satisfies
\begin{equation}
\label{Eq:constcurv}
K_{\sigma(s)}(\delta(s)) \equiv c,
\end{equation}
where 
$$
\sigma(s) := \spn\left\{d\delta_t/ds |_s, J(s)\right\} \subset T_{\delta_t(s)}M
$$
is a 2-dimensional plane spanned by $d\delta_t / ds |_s$ and $J(s)$.

Put $\tilde \gamma(t) := \exp^{-1}_p \gamma(t)$, and define $\delta(t,s):=\exp_p (s \tilde \gamma(t))$, $(t, s) \in [d, |pq|] \times [0,t]$. Then $\delta$ defines a smooth 2-dimensional submanifold in $M$, which we called a cone $\mathcal C(p, \gamma)$, such that for a given $t$ the curve $\delta(t, s) = \delta_t(s)$ is a geodesic. The vector field $I(s):=\partial \delta/ \partial t (t,s)$ is a Jacobi field along $\delta_t$. Moreover, $I(0)=0$ and $I(t)$ is proportional to $X(t)$, where $X(t)$ is a unit vector field tangent to $\gamma(t)$. By construction, $X(t) \in \spn\{Y(t), \grad_M \di_p(\gamma(t))\}$. Therefore, $I(t) \in \spn \{J(t), \grad_M \di_p(\gamma(t))\} = \spn\{J(t), d\delta_t/ds|_{s=t}\}$. If $E_1(s) = d \delta/ds (s)$, $E_2(s), \ldots, E_m(s)$ is the orthonormal frame spanning $T_{\delta_t(s)} M$ and parallel along $\delta_t(s)$ such that $\spn\{E_1(t), E_2(t)\} = \sigma(t)$, then it follows that $J(s) \in \spn\{E_1(s), E_2(s)\}$ and $I(s) \in \spn\{E_1(s), E_2(s)\}$ for all $s \in [0,t]$. This implies that $\sigma(s) = \spn\{d \sigma_t /ds |_s, I(s)\} = \{\partial \delta /\partial s, \partial \delta / \partial t\}|_{(t,s)}$. Using (\ref{Eq:constcurv}) we obtain that $\mathcal C(p, \gamma)$ is a 2-dimensional submanifold of constant curvature $c$ bounded by two geodesics and a curve of constant geodesic curvature $\lambda$. The last assertion follows from~(\ref{Eq:equalitycase2}). The theorem is proven.  
\end{proof}

\section{Proof of Theorem~\ref{Thm:convexforballs}}
\label{Sec:ProofThmA}

Consider an arbitrary point $s \in \Sigma$, and let $\bar s$ and $B(\bar s, R_\lambda)$ be as in the statement of the theorem. Note that the point $\bar s$ with $|s \bar s| = R_\lambda$ is well defined since $R_\lambda < \inj (M)$. For brevity, put $S:=\partial B(\bar s, R_\lambda)$.

\medskip
\noindent{\textbf{I. Rolling:}} In order to prove~(\ref{Eq:ballincl}) it is enough to show
\begin{equation}
\label{racbltheq-1}
|\bar s q| \leqslant R_\lambda \quad \text{ for all } \quad q \in \Sigma \sm \{s\}. 
\end{equation}

For some small $\epsilon < R_\lambda$, pick a point $p \in D_\Sigma$ such that $\epsilon = \dist(p,\Sigma) = |pq|$. Such a point always exists. 

We want to apply the Radial Angle Comparison Theorem (Theorem~\ref{RACThm}). For that, let $\mathcal S_\lambda$ be a sphere of curvature $\lambda$ in the model space $M(c)$ centered at $\bar s_\lambda$, $D_{\mathcal S_\lambda}$ be the ball bounded by $\mathcal S_\lambda$, and consider a point $p_\lambda \in D_{\mathcal S_\lambda} \sm \mathcal S_\lambda$ such that $\dist (p_\lambda, \mathcal S_\lambda) =  \epsilon$ (see Figure~\ref{Fig:Comparison}). Then for the origins $p$ and $p_\lambda$ in $D_\Sigma$ and $D_{\mathcal S_\lambda}$ we have well-defined radial angle functions $\phi$ and $\phi_\lambda$, correspondingly.  

\begin{center}
\begin{figure}[h]
\makebox[\textwidth][c]{\includegraphics[width=1.2\textwidth, trim=40 22 30 33, clip]{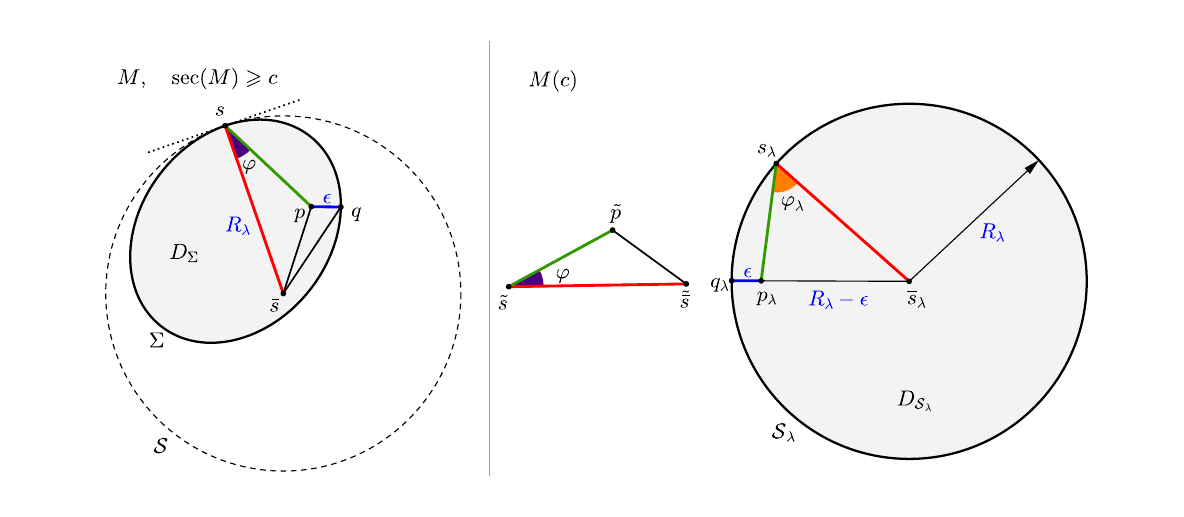}}
\caption{Comparison construction in the proof of Theorem~\ref{Thm:convexforballs}.}
\label{Fig:Comparison}
\end{figure}
\end{center}

There are two possibilities: either $|ps| < 2R_\lambda - \epsilon$, or $|ps| \geqslant 2 R_\lambda - \epsilon$. First assume $|ps| < 2R_\lambda - \epsilon$; then there exists a point $s_\lambda \in \mathcal S_\lambda$ such that 
\begin{equation}
\label{racbltheq0}
|p s| = |p_\lambda s_\lambda|.
\end{equation}

By the Radial Angle Comparison Theorem (Theorem~\ref{RACThm}), 
\begin{equation}
\label{racbltheq1}
\phi (s) \leqslant \phi_\lambda(s_\lambda).
\end{equation}

Denote by $\Delta$ the geodesic triangle $\triangle \bar s s p$ in the Riemannian manifold $M$, and by $\Delta_\lambda$ the geodesic triangle $\triangle \bar s_\lambda s_\lambda p_\lambda$ in $M(c)$. Finally, in the model space $M(c)$ introduce a comparison triangle $\tilde \Delta = \triangle \tilde{\bar s} \tilde s \tilde p$ such that 
$$
\left|\tilde{\bar s} \tilde s\right| = \left|\bar s s\right| \,(=R_\lambda), \quad \left|\tilde s \tilde p\right| = |s p|, \quad \angle \tilde{\bar s} \tilde s \tilde p = \angle \bar s s p \, (=\phi(s)) .	
$$

By Toponogov's triangle comparison theorem (see, e.g., \cite[Chapter IV, Thm.~4.2 (2)]{Sa92}) for the triangles $\Delta$ and $\tilde \Delta$ we have
\begin{equation}
\label{racbltheq2}
|\bar s p| \leqslant |\tilde{\bar s} \tilde p|.
\end{equation}

At the same time, from~(\ref{racbltheq0}), (\ref{racbltheq1}) and the way we have constructed the point $\bar s_\lambda$ it follows that for the elements of the triangles $\tilde \Delta$ and $\Delta_\lambda$ in $M(c)$ the relations
\begin{equation*}
\begin{aligned}
&R_\lambda= |\bar s_\lambda s_\lambda| = \left|\tilde{\bar s} \tilde s\right|, \quad |s_\lambda p_\lambda| = |\tilde s \tilde p|, \\
&\phi_\lambda(s_\lambda) = \angle \bar s_\lambda  s_\lambda p_\lambda \geqslant \angle \tilde{\bar s} \tilde s \tilde p =\phi(s)
\end{aligned}
\end{equation*}
hold. Therefore,
\begin{equation}
\label{Eq:racineq}
|\bar s_\lambda p_\lambda| \geqslant \left|\tilde {\bar s} \tilde p\right|.
\end{equation}
Taking into account inequality~(\ref{racbltheq2}) we arrive to the following key inequality
\begin{equation*}
\label{racbltheq3}
|\bar s p| \leqslant |\bar s_\lambda p_\lambda|.
\end{equation*}

By construction, $|\bar s_\lambda p_\lambda| = R_\lambda - \dist(p_\lambda, \mathcal S_\lambda) = R_\lambda - \dist(p, \Sigma) = R_\lambda - \epsilon$. Thus
\begin{equation}
\label{racbltheq4}
|\bar s p| \leqslant R_\lambda - |pq|.
\end{equation}  
From the geodesic triangle $\triangle \bar s p q$ by the triangle inequality and inequality~(\ref{racbltheq4}), it follows that
\begin{equation}
\label{Eq:triangleineq}
|\bar s q| \leqslant |\bar s p| + |p q| \leqslant R_\lambda - |pq| + |pq| = R_\lambda,
\end{equation}
which proves~(\ref{racbltheq-1}) under the assumption $|ps| < 2 R_\lambda - \epsilon$.

It remains to consider what will happen if $|ps| \geqslant 2R_\lambda - \epsilon$ for any small $\epsilon$. In such situation the radial angle comparison theorem is not directly applicable. In fact, such situation is not possible at all, as we will see momentarily. 

If $|sq| < 2R_\lambda$ for any $q \in \Sigma$, then there exits $\delta_q = \delta(q) > 0$ such that $|sq| = 2 R_\lambda - \delta_q$. Thus if we choose $\epsilon < \delta_q / 2$ and $p$ as above ($|pq| = \epsilon$), then
$$
|sp| \leqslant |sq| + |qp| = 2 R_\lambda - \delta_q + |pq| < 2R_\lambda - \epsilon, 
$$
and we can run the argument above. 

Obviously, for $q$ in the neighborhood of $s$ on $\Sigma$ the inequality $|sq| < 2R_\lambda$ holds. Let $U(s) \subset \Sigma$ be the maximal such neighborhood. Then $|sb| = 2 R_\lambda$ for every $b \in \partial U(s)$, if the boundary is not empty. By the argument above, in particular from~(\ref{racbltheq-1}), $U(s) \subset B(\bar s, R_\lambda)$. The geodesic sphere $\mathcal S$, as a subset of $M$, has diameter $2R_\lambda$, hence $\partial U(s)$ can be only a single point diametrically opposite to $s$. Claim~(\ref{Eq:ballincl}) follows.

\medskip
\noindent{\textbf{II. Equality case:}} Let us now turn to the equality case. Recall that $\Sigma_s = \Sigma \cap B(\overline s, R_\lambda)$. It is clear that $s \in \Sigma_s$; assume that $\Sigma_s \sm \{s\} \neq \emptyset$. Pick a point $q \in \Sigma_s \sm \{s\}$, and assume that $q$ is not diametrically opposite to $s$. Then at $q$ we must have equalities simultaneously in (\ref{racbltheq2}), (\ref{Eq:racineq}) and (\ref{Eq:triangleineq}). Let us analyze each equality condition.

Equality in (\ref{racbltheq2}) implies equality case in Toponogov's comparison theorem (see \cite[Chapter~IV, Remark~4.5]{Sa92}) which tells that $\triangle \bar s s p$ must span a totally geodesic 2-dimensional submanifold isometric to a geodesic triangle in the model space of curvature $c$.

Equality in (\ref{Eq:racineq}) yields equality in the Radial Angle Comparison Theorem (Theorem~\ref{RACThm}). By decreasing $\epsilon$ we may assume that distance function $\di_p$ has a unique minimum at $q$, and therefore the integral trajectory $\gamma$ of $\di_{p, \Sigma}$ passing through $s$ will have $q$ as a limiting point. Applying the equality case in Theorem~\ref{RACThm} to the arc $\gamma_{sq}$ of this trajectory between $s$ and $q$ we obtain that the cone $\mathcal C(p, \gamma_{sq})$ must be a 2-dimensional totally geodesic submanifold of $M$ isometric to a 2-dimensional curvilinear triangle in $M(c)$ bounded by two geodesic segments and an arc of constant geodesic curvature $\lambda$. 

Finally, equality in (\ref{Eq:triangleineq}) means that we must have equality in a triangle inequality, which is only possible if the three points $q$, $p$ and $\bar s$ lie on the same geodesic. 

Combining all the equality conditions above we get the following \textsc{Claim}: if $q \in \Sigma_s \sm \{s\}$, then the cone $\mathcal C(\bar s, \gamma_{sq})$ (which, as a set, is equal to $\Delta \cup \mathcal C(p, \gamma_{sq})$) is a totally geodesic 2-dimensional submanifold of $M$ isometric to a sector in a 2-dimensional disk of radius $R_\lambda$ lying in $M(c)$. 

This \textsc{Claim} yields that $|\bar s x| = R_\lambda$ for every $x \in \gamma_{sq}$. Therefore, $\gamma_{sq} \subset \Sigma_s$ for every $q \in \Sigma_s \sm \{s\}$ , and thus $\Sigma_s$ is connected.

Let $\bar s_\lambda$ be a point in $M(c)$, and fix some linear isometry $I \colon T_{\bar s} M \to T_{\bar s_\lambda} M(c)$ between the tangent spaces of $M$ and $M(c)$ at $\bar s$ and $\bar s_\lambda$. Define a map 
$$
\Phi := \exp_{\bar s_\lambda} \circ I \circ \exp^{-1}_{\bar s} \colon M \to M(c).
$$
Since $R_\lambda < \inj(M)$, the map $\Phi$ sends $B(\bar s, R_\lambda)$ diffeomorphically onto $D_{\mathcal S_\lambda}$. Let $\gamma_{s_\lambda q_\lambda}$ be a unique geodesic segment in $\mathcal S_\lambda$ joining $s_\lambda = \Phi(s)$ and $q_\lambda = \Phi(q)$. Then, by definition of a cone, $\mathcal C(\bar s_\lambda, \gamma_{s_\lambda q_\lambda})$ is a 2-dimensional sector in $D_{\mathcal S_\lambda}$ with the angle less than $\pi$. By the \textsc{Claim} above, $\Phi$ maps $\mathcal C(\bar s, \gamma_{sq})$ isometrically onto $\mathcal C(\bar s_\lambda, \gamma_{s_\lambda q_\lambda})$. Hence, the length of $\gamma_{sq}$ is less than a half of circumference of $\mathcal S_\lambda$.

Let $S \subset T_{\bar s} M$ be an $m$-dimensional sphere with radius $R_\lambda$ centered at the origin. Define $\conv_S(A)$ to be a convex hull in $S$ of a subset $A \subset S$; we regard $T_{\bar s} M$ as $\mathbb R^m$ and $S$ as a sphere with the standard metric in $\mathbb R^m$. In such terms for a given pair $q_1, q_2 \in \Sigma_s$ we showed that 
\begin{equation}
\label{Eq:convcond}
\exp_{\bar s}(x) \in \Sigma_s \text{ for all }x \in \conv_S \left(\{\exp_{\bar s}^{-1}(q_1), \exp_{\bar s}^{-1}(q_2)\}\right).
\end{equation}

Applying (\ref{Eq:convcond}) inductively, we get that $\Phi(\Sigma_s)$, as a subset of $\mathcal S_\lambda$, must be convex. This yields the equality case under conditions of case (ii). Otherwise, if we are under the conditions of case (i) of Theorem~\ref{Thm:convexforballs}, then 
$$
\conv_S(\{\exp_{\bar s}^{-1}(s_1), \ldots, \exp_{\bar s}^{-1}(s_{m+1}) \}) = S,
$$
which immediately implies that $\Sigma_s = \partial B(\bar s, R_\lambda)$, and the conclusion of case (i) follows. The theorem is proven.
\qed

\section{Proof of Theorem~\ref{Thm:convexforhoroballs}}
\label{Sec:ProofThmB}

The proof of Theorem \ref{Thm:convexforhoroballs} is similar to the proof of the inclusion part in Theorem~\ref{Thm:convexforballs}. 

Let $b^s \colon M \to \mathbb R$ be a Busemann function associated to the ray $\gamma_s$. Recall that, by definition, $H(\gamma_s) = \{x \in M \colon b^s(x) \geqslant 0\}$ is a closed horoball centered at the ideal boundary point $\gamma_s(\infty)$, while the null set of $b^s(\cdot)$ defines the horosphere $\partial H(\gamma_s)$. Let $t$ be the arc length parameter for $\gamma_s$ with $\gamma_s(0) = s$.

In order to prove~(\ref{Eq:horoballincl}) it is enough to show that 
\begin{equation}
\label{Eq:horineq}
b^s(q) \geqslant 0 \quad \text{for every} \quad q \in \Sigma.
\end{equation}

Let us consider arbitrary $q \in \Sigma \sm \{s\}$, and as in the proof of Theorem~\ref{Thm:convexforballs}, pick a point $p \in D_\Sigma \sm \Sigma$ such that $\dist(p, \Sigma) = |pq| = \epsilon$ for some small $\epsilon$. 

Now we want to define the comparison objects. Let $\mathcal H$ be a horoball in the hyperbolic space $\mathbb H(-\lambda^2)$ defined with respect to a geodesic ray $\gamma_{s_\lambda} \colon [0, +\infty) \to \mathbb H(-\lambda^2)$, $\gamma_{s_\lambda}(0) = s_\lambda$ (we assume that $\gamma_{s_\lambda}$ is parameterized by the arc length). The boundary $\partial \mathcal H$ (a horosphere) of $\mathcal H$ is a totally umbilical hypersurface of curvature $\lambda$. Let $b^{s_\lambda}(\cdot)$ be the Busemann function corresponding to $\gamma_{s_\lambda}$. 

Pick $q_\lambda \in \partial \mathcal H$ and $p_\lambda \in \mathcal H$ such that 
$$
|s_\lambda p_\lambda| = |sp|, \quad |p_\lambda q_\lambda| = \dist(p_\lambda, \partial \mathcal H) = \epsilon.
$$ 

Suppose $\phi$ and $\phi_\lambda$ are the radial angle functions defined on $\Sigma$ and $\partial \mathcal H$ with respect to $p$ and $p_\lambda$, respectively. By the Radial Angle Comparison Theorem for $\lambda$-convex domains in Hadamard manifolds \cite{BGR01, BM02}, similar to (\ref{racbltheq0}), we must have
\begin{equation}
\label{Eq:horRAC}
\phi(s) \leqslant \phi_\lambda (s_\lambda).
\end{equation}

Pick an arbitrary $t>0$. Inequality (\ref{Eq:horRAC}) together with Toponogov's triangle comparison theorem, analogously to the proof of Theorem~\ref{Thm:convexforballs}, for the triangles $\triangle sp\gamma_s(t)$ and $\triangle s_\lambda p_\lambda \gamma_{s_\lambda}(t)$ yields
\begin{equation*}
\label{}
|p\gamma_s(t)| \leqslant |p_\lambda\gamma_{s_\lambda}(t)|.
\end{equation*}
Therefore, by the triangle inequality,
\begin{equation}
\label{Eq:horchain}
|q \gamma_s(t)| \leqslant |p \gamma_s(t)| + |pq| \leqslant |p_\lambda \gamma_{s_\lambda}(t)| + \epsilon = |p_\lambda \gamma_{s_\lambda}(t)| + |p_\lambda q_\lambda|,
\end{equation}
and thus
\begin{equation}
\label{Eq:horchain2}
t - |q \gamma_s(t)| \geqslant t - |p_\lambda \gamma_{s_\lambda}(t)| - |p_\lambda q_\lambda|.
\end{equation}
Sending $t \to \infty$ in the last inequality, we obtain
$$
b^s(q) \geqslant b^{s_\lambda}(p_\lambda) - |p_\lambda q_\lambda|
$$
At the same time,  
\begin{equation}
\label{Eq:hyphorprop}
 b^{s_\lambda}(p_\lambda) - |p_\lambda q_\lambda| = b^{s_\lambda}(q_\lambda) = 0
\end{equation}
since $\partial \mathcal H$ is horospheres in a hyperbolic space (namely, since $\partial \mathcal H$ is a horosphere and $q_\lambda p_\lambda \perp \partial \mathcal H$ , the geodesic ray $q_\lambda p_\lambda$ is passing through the same ideal boundary point as $\gamma_{s_\lambda}$, yielding the first equality in (\ref{Eq:hyphorprop}); the second equality in (\ref{Eq:hyphorprop}) follows since $q_\lambda \in \partial \mathcal H$). Therefore, $b^s(q) \geqslant 0$, which proves (\ref{Eq:horineq}).  
\qed


\section{Open questions}
\label{Sec:OpenQuestions}

In this section, we present a list of open questions related to the results of the paper.

\medskip

{\bf Question 1} (Asymptotic rigidity in Toponogov comparison). It is natural to expect a rigidity part in Theorem~\ref{Thm:convexforhoroballs} similar to part~\textbf{II} of Theorem~\ref{Thm:convexforballs}. As it follows from the proof in Section~\ref{Sec:ProofThmB}, to establish such a rigidity part it would require proving the following conjectural \emph{asymptotic rigidity} in Toponogov's comparison theorem: 

\smallskip

{\it
Let $M^m$ be an $m$-dimensional Hadamard manifold with $0>\sec(M^m) \ge - 1$. Let $b \colon M^m \to \mathbb R$ and $b_{1}\colon \mathbb H^m(-1) \to \mathbb R$ be a pair of Busemann functions defined with respect to geodesic rays $\gamma \colon [0, +\infty) \to M^m$ and $\gamma_{1} \colon [0, +\infty) \to \mathbb H^m(-1)$ starting at $x = \gamma(0)$ and $x_1 = \gamma_1(0)$ with the ideal boundary points $z^\infty = \gamma(+\infty)$ and $z_{1}^\infty = \gamma_{1}(+\infty)$. Suppose there exist points $y \in M^m$ and $y_1 \in \mathbb H^m(-1)$ such that
$$
|xy|=|x_1 y_1|, \quad b(y) = b_1(y_1), \quad \text{and}\quad \angle(\gamma, xy) = \angle(\gamma_1, x_1 y_1).
$$
Then the ideal triangle $x y z^\infty$ spans a totally geodesic submanifold isometric to the ideal triangle $x_1 y_1 z_1^\infty$ in $\mathbb H^2(-1)$.
}  

Note that in the statement above, $b(x) = b_1(x_1) = 0$ by construction.

\medskip

{\bf Question 2} (Equidistant rolling.) For negatively curved Riemannian manifolds, Theorems~\ref{Thm:convexforballs} and \ref{Thm:convexforhoroballs} establish a Riemannian version of Blaschke's rolling theorems in balls and horoballs. \textit{Is there a way to give a meaning to the rolling in convex domains bounded by `equidistant hypersurfaces' in the Riemannian sense? } 

\medskip
{\bf Question 3} (Inner rolling theorem in Riemannian setting). Dual to the notion of $\lambda$-convexity, there is a notion of \emph{$\lambda$-concavity}. A convex domain $D_\Sigma$ is called $\lambda$-concave if the normal curvatures of $\Sigma$ with respect to the inward pointing normals are bounded \emph{above} by $\lambda > 0$. 

\smallskip
\textit{Is there a version of Blaschke's rolling theorem for $\lambda$-concave domains in Riemannian manifolds with $\sec \le c$ under the sphere constraints \eqref{Eq:Sphere}?} 
\smallskip

In this setting, we expect that the balls roll inside the $\lambda$-concave domain. 

In general, the dual Blaschke's rolling theorem for the rolling of the ball inside a domain is not true even for constant curvature spaces, as the following construction shows. Take a 2-dimensional hyperbolic space in the unit disk model. Let $o$ be the center of the disk. Consider two disjoint hyperbolic balls of radii $R_\lambda$ symmetric with respect to $o$, and let $\mathcal C$ be the convex hull of these two balls. It is easy to see that the boundary $\partial \mathcal C$ of this hull will have be a convex curve of geodesic curvature at most $\lambda$ (except 4 points where it is not well defined). In fact, arcs with curvature $\lambda$ and $0$ will alternate. If the dual Blaschke's theorem was true, then a ball of radius $R_\lambda$ should roll freely inside $\mathcal C$. The last assertion is false, for instance, for the ball tangent at a closest to $o$ point on $\partial \mathcal C$. This example can be easily modified to make the curve smooth.

\smallskip
The example above partially motivates the following conjecture: {\it
Let $M$ be a smooth complete simply connected Riemannian manifold of dimension $m \ge 2$ with $\sec(M) \le c$. Suppose $\lambda\in \mathbb R$ satisfies the sphere constrains, and $R_\lambda$ is the corresponding radius of the sphere. Let $D_\Sigma \subset M$ be a $\lambda$-concave domain with smooth boundary compactly contained in some ball of radius $\inj(M)/2$. For $c < 0$, additionally assume that $D_\Sigma$ is $\sqrt{-c}$-convex. Then a ball of radius $R_\lambda$ can roll freely inside $D_\Sigma$. 
}


\begin{thebibliography}{99}


\bibitem{AB98}
S. Alexander, R. Bishop, Thin Riemannian manifolds with boundary, Math. Ann. 311 (1998) 55--70.



\bibitem{Area}
V. Bangert, Totally convex sets in complete Riemannian manifolds, J. Diff. Geom. 16 (1981) 333--345.

\bibitem{Convexity}
R. Bartolo, E. Caponio, A.~V.~Germinario, M. S\'anchez, Convex domains of Finsler and Riemannian manifolds, Calc. Var. Partial Diff. Eq. \textbf{40} (3-4) (2011) 335--356. 

\bibitem{Bla56}
W. Blaschke, Kreis und Kugel, 3rd edition, de Gruyter, Berlin, 1956.

\bibitem{Bor02} 
A.\,A. Borisenko, Convex sets in Hadamard manifolds, Differential Geom. Appl. 17 (2-3) (2002) 111--121.

\bibitem{Bor05} 
A.\,A. Borisenko, Convex Hypersurfaces in Hadamard Manifolds. In: Kowalski~O., Musso~E., Perrone~D. (eds) Complex, Contact and Symmetric Manifolds. Progress in Mathematics, vol 234. Birkh{\"a}user Boston.

\bibitem{BDr13} 
A.\,A. Borisenko, K. Drach, Closeness to spheres of hypersurfaces with normal curvature bounded below, Sb. Math. 204 (11) (2013) 1565--1583.

\bibitem{BGR01} 
A.\,A. Borisenko, E. Gallego, A. Revent\'os, Relation between area and volume for $\lambda$-convex sets in Hadamard manifolds, Differential Geom. Appl. 14 (3) (2001) 267--280.

\bibitem{BM02} 
A.\,A.~Borisenko, V. Miquel, Comparison theorems on convex hypersurfaces in Hadamard manifolds, Ann. Glob. Anal. Geom. 21 (2) (2002) 191--202.

\bibitem{BorMiqBla2}
A.\,A. Borisenko, V. Miquel, Inscribed and circumscribed radius of $\kappa$-convex hypersurfaces in Hadamard manifolds, arXiv:2307.09123 [math.DG], 2023.

\bibitem{BorMiqBla1}
A.\,A. Borisenko, V. Miquel, A discrete Blaschke theorem for convex polygons in 2-dimensional space forms, J. Math. Phys. Anal. Geom. (2024), \textit{to appear}.

\bibitem{BrStr89}
J.\,N. Brooks, J.\,B. Strantzen, Blaschke's rolling theorem in $\mathbb R^n$, Mem. Amer. Math. Soc. 80 (405) (1989).

\bibitem{BZ} 
Yu. D. Burago, V. A. Zalgaller, {Geometric Inequalities}. Springer-Verlag, New York, 1988.



\bibitem{doC76} 
M.~P.~do Carmo, Differential Geometry of Curves and Surfaces, Prentice-Hall, 1976.

\bibitem{doC92} 
M.\,P. do Carmo, Riemannian Geometry, Birkh{\"a}user Basel, 1992.

\bibitem{CR78}
T. Cecil, P. Ryan, Focal sets of submanifolds, Pacific J. Math. 78 (1) (1978), 27--39.

\bibitem{CDT}
R. Chernov, K. Drach, K. Tatarko, {A sausage body is a unique solution for a reverse isoperimetric problem}. Adv. Math. \textbf{353} (2019), 431--445.


\bibitem{Del79}
J.\,A. Delgado, Blaschke's theorem for convex hypersurfaces, J. Diff. Geom. 14 (4) (1979) 489--496.

\bibitem{DrPhD}
K. Drach, \emph{Extreme bounds for complete hypersurfaces in Riemannian spaces}. PhD thesis, B.~Verkin Institute for Low Temperature Physics and Engineering of the NAS of Ukraine, Kharkiv, 2016. 

\bibitem{DT}
K. Drach, K. Tatarko, {Reverse isoperimetric problems under curvature constraints}, arXiv:2303.02294 [math.MG], 2023.


\bibitem{Ger06}
C. Gerhardt, Curvature Problems, Series in Geometry and Topology, 39. International Press, Somerville, MA, 2006.


\bibitem{HW}
F.~Hang, X.~Wang, Rigidity theorems for compact manifolds with boundary and positive Ricci curvature, J. Geom. Anal. 19 (3) (2009) 628--642.

\bibitem{How99}
R. Howard, Blaschke's rolling theorem for manifolds with boundary, Manuscripta Math. 99 (4) (1999), 471--483.


\bibitem{Io01}
V.\,K. Ionin, Inequalities between the radii of some spheres that are connected with a convex surface in space of constant curvature, Siberian Math. J. 42 (3) (2001), 473--477.


\bibitem{Kar68}
H. Karcher, Umkreise und Inkreise konvexer Kurven in der sph{\"a}rischen und der hyperbolischen Geometrie, Math. Ann. 177 (1968) 122--132.

\bibitem{Kou72}
D. Koutroufiotis, On Blaschke's rolling theorems, Arch. Math. 23 (1972) 655--660.


\bibitem{Mil70}
A.\,D. Milka, A certain theorem of Schur-Schmidt. (Russian) Ukrain. Geometr. Sb. Vyp. 8 (1970) 95--102.


\bibitem{Pet}
P. Petersen, {Riemannian Geometry}. Graduate Texts in Mathematics 171, 3rd edition, 2016.


\bibitem{Rau74}
J. Rauch, An inclusion theorem for ovaloids with comparable second fundamental forms, J. Diff. Geom. 9 (1974) 501--505.

\bibitem{Re16}
S.\,Gy. R\'ev\'esz, A discrete extension of the Blaschke rolling ball theorem, Geom. Dedicata. 182 (2016) 51--72.


\bibitem{Sa92}
T.\,Sakai, Riemannian geometry, 
Translations of mathematical monographs, vol. 149, 1992. 


\bibitem{Top}
V. A. Toponogov, A bound for the length of a convex curve on a two-dimensional surface, Sibirsk. Mat. Zh., 4:5 (1963), 1189--1193

\end{thebibliography}
\end{document}